\newtheorem{thm}{Theorem}[section]
\newtheorem{Lemma}[thm]{Lemma}
\newtheorem{Proposition}[thm]{Proposition}
\newtheorem*{thm*}{Theorem}
\theoremstyle{definition}
\newtheorem{Definition}[thm]{Definition}
\newtheorem{Remark}[thm]{Remark}
\newtheorem{Question}[thm]{Question}
\newtheorem{Example}[thm]{Example}
\newcommand{\Z}{\mathbb{Z}}
\renewcommand{\P}{\mathbb{P}}
\DeclareMathOperator{\Pic}{Pic}
\DeclareMathOperator{\VSP}{VSP}
\DeclareMathOperator{\Hilb}{Hilb}
\DeclareMathOperator{\diag}{diag}
\DeclareMathOperator{\mult}{mult}
\DeclareMathOperator{\im}{Im}
\begin{document}

\title[\resizebox{5.5in}{!}{Varieties of sums of powers and moduli spaces of (1,7)-polarized abelian surfaces}]{Varieties of sums of powers and moduli spaces of (1,7)-polarized abelian surfaces}

\author[Michele Bolognesi]{Michele Bolognesi}
\address{\sc Michele Bolognesi\\
IMAG - Universit\'e de Montpellier\\
Place Eug\`ene Bataillon\\
34095 Montpellier Cedex 5\\ France}
\email{michele.bolognesi@umontpellier.fr}

\author[Alex Massarenti]{Alex Massarenti}
\address{\sc Alex Massarenti\\
Universidade Federal Fluminense\\
Rua M\'ario Santos Braga\\
24020-140, Niter\'oi,  Rio de Janeiro\\ Brazil}
\email{alexmassarenti@id.uff.br}

\date{\today}
\subjclass[2010]{Primary 11G10, 11G15, 14K10; Secondary 14E05, 14E08, 14M20}
\keywords{Moduli of abelian surfaces; Varieties of sums of powers; Rationality problems; Rational, unirational and rationally connected varieties}

\begin{abstract}
We study the geometry of some varieties of sums of powers related to the Klein quartic. This allows us to describe the birational geometry of certain moduli spaces of abelian surfaces. In particular we show that the moduli space $\mathcal{A}_2(1,7)^{-}_{sym}$ of $(1,7)$-polarized abelian surfaces with a symmetric theta structure and an odd theta characteristic is unirational by showing that it admits a dominant morphism from a unirational conic bundle.
\end{abstract}
\maketitle

\tableofcontents

\section{Introduction}\label{intro}
Varieties of sums of powers, $\VSP$ for short, parametrize decompositions of a general homogeneous polynomial $F\in k[x_{0},...,x_{n}]_d$ as sums of powers of linear forms. They have been widely studied from both the biregular \cite{IR01}, \cite{Mu92}, \cite{RS00} and the birational viewpoint \cite{MM13}, \cite{Ma16}, and furthermore in relation to secant varieties \cite{COV17b}, \cite{COV17a}, \cite{Me06}, \cite{Me09}. 

The relation of $\mathcal{A}_2(1,7)$, the moduli space of abelian surfaces with a polarization of type $(1,7)$ and a $(1,7)$-level structure, with the variety of sums of powers of the Klein quartic dates back to the work of S. Mukai \cite{Mu92}.

In this paper we investigate the birational geometry of some moduli spaces of abelian surfaces related to $\mathcal{A}_2(1,7)$. In particular, if we endow the abelian surfaces in $\mathcal{A}_2(1,7)$ with a symmetric theta structure and a theta characteristic (odd or even), we obtain two new moduli spaces, $\mathcal{A}_2(1,7)^{-}_{sym}$ and $\mathcal{A}_2(1,7)^{+}_{sym}$, that are finite covers of degree $6$ and $10$ respectively of $\mathcal{A}_2(1,7)$. For a general introduction to these spaces see \cite[Sections 6.1.1]{BM16}. We introduce also the moduli space $\mathcal{A}_2(1,7;2,2)$ parametrizing abelian surfaces with a polarization of type $(1,7)$, a $(1,7)$-level structure and a $(2,2)$-level structure.

Furthermore, we introduce new types of varieties of sums of powers and showcase rational maps between them and our moduli spaces of abelian surfaces.
The first variety of sums of powers we take into account is a variety where we also allow ourselves to fix an order on the linear forms that make up the decomposition of the polynomial. Let $\nu_{d}^{n}:\mathbb{P}^{n}\rightarrow\mathbb{P}^{N(n,d)}$, with $N(n,d) =\binom{n+d}{d}-1$ be the degree $d$ Veronese embedding of $\mathbb{P}^n$, and let $V_{d}^{n} = \nu_{d}^{n}(\mathbb{P}^{n})$ be the corresponding Veronese variety.

\begin{Definition}
Let $F$ be a general polynomial of degree $d$ in $n+1$ variables. We define 
$$\VSP_{ord}(F,h)^{o} := \{(L_{1},...,L_{h})\in (\mathbb{P}^{n*})^{h} \: | \: 
F\in \langle L_{1}^d,...,L_{h}^d\rangle\subseteq\mathbb{P}^{N(n,d)}\}\subseteq (\mathbb{P}^{n*})^{h}$$
and $\VSP_{ord}(F,h) := \overline{\VSP_{ord}(F,h)^{o}}$ by taking the closure of $\VSP_{ord}(F,h)^{o}$ in $(\mathbb{P}^{n*})^{h}$. 
\end{Definition}

Then we consider the natural action of the symmetric group $S_h$ on $\VSP_{ord}(F,h)$ and two related variations on the classical definition of $\VSP$.

\begin{Definition}
Consider the rational action of $S_{h-1}$ on $\VSP_{ord}(F,h)$ given by permuting the linear forms $(L_2,\dots,L_h)$. The variety $\VSP_{h}(F,h)$ is the quotient 
$$\VSP_{h}(F,h) = \VSP_{ord}(F,h)/S_{h-1}.$$
If $h = 2r$ is even, we consider the rational action of $S_r\times S_r$ on $\VSP_{ord}(F,h)$ such that the first and the second copy of $S_r$ act on the first and the last $r$ linear forms respectively. Let $X_{h}(F) = \VSP_{ord}(F,h)/S_{r}\times S_r$. This space comes with a natural $S_2$-action switching $\{L_1,\dots, L_r\}$ and $\{L_{r+1},\dots,L_{h}\}$. We define 
$$\VSP^{h}(F,h):= X_h(F)/S_{2}.$$  
\end{Definition}

The first main result of this paper is the following.

\begin{thm}
The moduli spaces $\mathcal{A}_2(1,7)^{-}_{sym}$ and $\mathcal{A}_2(1,7)^{+}_{sym}$ are birational to the varieties $\VSP_6(F_4,6)$ and $\VSP^6(F_4,6)$ respectively, where $F_4\in k[x_0,x_1,x_2]_4$ is the Klein quartic. Furthermore, the moduli space $\mathcal{A}_2(1,7;2,2)$ is birational to $\VSP_{ord}(F_4,6)$.
\end{thm}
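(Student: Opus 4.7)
The plan is to start from S. Mukai's classical birational equivalence $\mathcal{A}_2(1,7) \dashrightarrow \VSP(F_4, 6)$ \cite{Mu92}, lift it to the universal $S_6$-cover $\VSP_{ord}(F_4,6)$, and then match the various intermediate quotients with the three moduli spaces of abelian surfaces appearing in the statement.

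First I would establish the birational equivalence $\mathcal{A}_2(1,7;2,2) \dashrightarrow \VSP_{ord}(F_4, 6)$. On the moduli side, $\mathcal{A}_2(1,7;2,2) \to \mathcal{A}_2(1,7)$ is a Galois cover with group the symplectic group $Sp(4, \mathbb{F}_2)$ acting on the $2$-torsion $A[2]$. On the $\VSP$ side, the natural $S_6$-quotient $\VSP_{ord}(F_4, 6)/S_6 = \VSP(F_4, 6)$ is birational to $\mathcal{A}_2(1,7)$ by Mukai. Exploiting the exceptional isomorphism $Sp(4, \mathbb{F}_2) \simeq S_6$, together with the $S_6$-equivariance of Mukai's construction, produces a birational equivalence between the two covers that identifies the two $S_6$-actions.

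Once this is in place, the remaining two statements should follow by taking quotients by suitable subgroups of $S_6$. A symmetric theta structure together with an odd theta characteristic singles out one of the $6$ odd theta characteristics, which under the correspondence above picks out one of the $6$ linear forms $L_i$ of the polar decomposition of $F_4$; since the remaining five forms are unordered, one obtains
$$\mathcal{A}_2(1,7)^{-}_{sym} \;\dashrightarrow\; \VSP_{ord}(F_4, 6)/S_5 \;=\; \VSP_6(F_4, 6).$$
Analogously, the $10$ even theta characteristics are in bijection with the $10$ unordered partitions of the six odd theta characteristics into two triples. An even theta characteristic therefore corresponds to an unordered partition $\{L_1,\dots,L_6\} = \{L_1,L_2,L_3\} \sqcup \{L_4,L_5,L_6\}$, yielding
$$\mathcal{A}_2(1,7)^{+}_{sym} \;\dashrightarrow\; \VSP_{ord}(F_4, 6)/(S_3 \times S_3 \rtimes S_2) \;=\; \VSP^6(F_4, 6).$$
A preliminary sanity check is that the degrees over $\VSP(F_4,6)\sim\mathcal{A}_2(1,7)$ match: $6!/5!=6$, $6!/(3!\,3!\,2!)=10$ and $6!=720 = |Sp(4,\mathbb{F}_2)|$, in agreement with the degrees of the finite covers given in the introduction.

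The hard part will be making the $S_6$-equivariance of Mukai's correspondence precise enough to justify all three quotient identifications uniformly. Concretely, one must pin down an explicit bijection between the $Sp(4, \mathbb{F}_2)$-orbits of symmetric theta structures paired with a theta characteristic on the abelian surface side and the $S_6$-orbits of additional combinatorial data (a marked linear form, or an unordered $3\!+\!3$ partition of linear forms) on the $\VSP$ side. Everything else, including the passage from the ordered version down to the two quotients $\VSP_6(F_4,6)$ and $\VSP^6(F_4,6)$, is then formal.
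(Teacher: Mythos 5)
Your proposal is correct and follows essentially the same route as the paper: both rest on the dictionary identifying the six odd theta characteristics with the six linear forms of the polar decomposition, the ten even theta characteristics with the $3+3$ partitions of those forms, and a $(2,2)$-level structure with a full ordering of them. The ``hard part'' you flag is discharged in the paper by the Theta-Null map $Th^-_{(1,7)}$: the birational map $\mathcal{A}_2(1,7)\dasharrow\VSP(F_4,6)$ of Gross--Popescu and Manolache--Schreyer sends $(A,\psi)$ to the images in $\mathbb{P}^2$ of the six odd $2$-torsion points, so the marked odd theta characteristic literally singles out one of the six linear forms, and the even and ordered cases then follow from the classical Weierstrass-point interpretation on the generically Jacobian surface.
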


The apolarity theory developed in \cite{DK93} allows us to produce in Section \ref{VSPsection} a $3$-fold conic bundle dominating $\VSP_6(F_4,6)$, and to conclude that it is unirational. In Section \ref{birsection} we develop some birational geometry of the moduli spaces of abelian surfaces that we are considering. In particular, as a consequence of the above result, we get the following.

\begin{thm}
The moduli space $\mathcal{A}_2(1,7)^{-}_{sym}$ is unirational, and hence its Kodaira dimension is $-\infty$.
\end{thm}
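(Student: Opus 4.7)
The plan is to reduce the statement, via the preceding theorem, to a unirationality claim about a variety of sums of powers, and then to exploit the apolarity-based conic bundle structure promised in the introduction. By the preceding theorem, $\mathcal{A}_2(1,7)^{-}_{sym}$ is birational to $\VSP_6(F_4,6)$, so it suffices to prove that $\VSP_6(F_4,6)$ is unirational. The strategy, to be executed in Section \ref{VSPsection}, is to construct a threefold conic bundle $\pi: X \to B$ over a rational surface $B$ together with a dominant rational map $X \dashrightarrow \VSP_6(F_4,6)$, and then to verify that $X$ itself is unirational.

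The construction of the conic bundle will rely on the apolarity theory of Dolgachev--Kanev \cite{DK93} applied to the Klein quartic $F_4$. Concretely, I would let $B$ parametrize a partial piece of the data of a decomposition (for instance, a subconfiguration of some of the six linear forms together with auxiliary apolar constraints) chosen so that $B$ is rational of dimension two, and arrange that the remaining freedom in extending a point of $B$ to a full element of $\VSP_6(F_4,6)$ is cut out by a quadratic condition on a $\mathbb{P}^2$-bundle over $B$, producing the conic fibration. Dominance of $X \dashrightarrow \VSP_6(F_4,6)$ would then follow by a dimension count, as both sides are threefolds, together with a generic finiteness check. This step is the main obstacle of the argument: the apolarity data must be chosen so that the base is manifestly rational and the residual condition on the fiber is manifestly of degree two.

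Granted the conic bundle, unirationality of $X$ follows by a standard argument. One exhibits a two-fold multisection $T \subset X$, inducing a double cover $T \to B$ of the rational base; over the function field of $T$ the generic conic fiber of $\pi$ acquires a rational point and so is isomorphic to $\mathbb{P}^1_{k(T)}$, which means $X \times_B T$ is birational to $T \times \mathbb{P}^1$ and dominates $X$ generically $2$ to $1$. For the specific rational surface $B$ and the natural double cover $T$ produced by the construction, unirationality of $T$ can be checked directly from its explicit description, giving unirationality of $X$. Composing with $X \dashrightarrow \VSP_6(F_4,6)$ yields the unirationality of $\mathcal{A}_2(1,7)^{-}_{sym}$, and the Kodaira dimension statement follows at once: any pluricanonical section would pull back to a pluricanonical section on a projective space, all of which vanish, so $\kappa(\mathcal{A}_2(1,7)^{-}_{sym}) = -\infty$.
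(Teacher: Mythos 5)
Your architecture coincides with the paper's: reduce via Theorem \ref{Prop17} to the unirationality of $\VSP_6(F_4,6)$, dominate that threefold by a conic bundle over a rational surface, and prove the conic bundle unirational. However, the two steps you yourself flag as ``to be arranged'' are exactly where the mathematical content lies, and as written they are genuine gaps rather than routine verifications. The conic bundle has to be produced, and the paper does so explicitly: by Lemma \ref{key}, any two linear forms $L,M$ occurring in a power-sum decomposition of $F_4$ satisfy the single apolarity equation $\Omega_{F_4}(L^2,M^2)=0$ of bidegree $(2,2)$, so the relevant variety is the hypersurface $\mathcal{P}_2\subset\mathbb{P}^{2*}\times\mathbb{P}^{2*}$, a conic bundle over $\mathbb{P}^{2*}$ via either projection, with discriminant a smooth sextic (Proposition \ref{p1}). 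More seriously, dominance of $\mathcal{P}_2\dasharrow\VSP_6(F_4,6)$ cannot be deduced from ``a dimension count plus a generic finiteness check'': one must show that a general point of $\mathcal{P}_2$ actually arises from a decomposition of $F_4$ and extends to one in only finitely many ways. The paper's key lemma is that two of the six linear forms determine the remaining four (by comparing the spans of second partials of $F_4-L^4-M^4$ for two putative decompositions and intersecting a pencil of conics); this simultaneously gives that $\VSP_{ord}(F_4,6)\to\mathcal{P}_2$ is generically finite onto the irreducible threefold $\mathcal{P}_2$ and that there is a well-defined dominant degree-$5$ map $\alpha_{F_4}:\mathcal{P}_2\dasharrow\VSP_6(F_4,6)$ sending $(L,M)$ to $(L,\{L,M,L_1,\dots,L_4\})$.

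Finally, your multisection mechanism is the correct general principle, but you never exhibit a unirational $2$-multisection, and this cannot be waved through: a threefold conic bundle over $\mathbb{P}^2$ with smooth sextic discriminant is in general not rational, and its unirationality is a theorem rather than a formality. The paper closes this step by computing the discriminant sextic explicitly and invoking \cite[Corollary 1.2]{Me14}. With these three items supplied --- the explicit apolar conic bundle $\mathcal{P}_2$, the ``two forms determine the decomposition'' lemma, and a genuine unirationality input for the conic bundle --- your outline becomes the proof given by Propositions \ref{p1} and \ref{p2} combined with Theorem \ref{Prop17}; the concluding Kodaira dimension remark is standard and fine.
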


At the end of the paper we also propose some open questions on the birational geometry of $\mathcal{A}_2(1,7)^{+}_{sym}$ and $\mathcal{A}_2(1,7;2,2)$. Throughout all the paper we will work over the complex field.

\subsection*{Acknowledgments}
The authors are members of the Gruppo Nazionale per le Strutture Algebriche, Geometriche e le loro Applicazioni of the Istituto Nazionale di Alta Matematica "F. Severi" (GNSAGA-INDAM). The first named author is member of the GDR "G\'eom\'etrie Alg\'ebrique G\'eom\'etrie Complexe" of the CNRS.

\section{Ordered varieties of sums of powers}\label{VSPsection}
Let $\nu_{d}^{n}:\mathbb{P}^{n}\rightarrow\mathbb{P}^{N(n,d)}$, with $N(n,d) =\binom{n+d}{d}-1$ be the Veronese embedding induced by $\mathcal{O}_{\mathbb{P}^{n}}(d)$, and let $V_{d}^{n} = \nu_{d}^{n}(\mathbb{P}^{n})$ be the corresponding Veronese variety. 

\begin{Definition}\label{vspord}
Let $F\in k[x_0,...,x_n]_{d}$ be a general homogeneous polynomial of degree $d$. Let $h$ be a positive integer and $\Hilb_h(\mathbb{P}^{n*})$ the Hilbert scheme of sets of $h$ points in $\mathbb{P}^{n*}$. We define 
$$\VSP(F,h)^{o} := \{\{L_{1},...,L_{h}\}\in\Hilb_{h}(\mathbb{P}^{n*})\: | \: F\in \langle L_{1}^d,...,L_{h}^d\rangle\subseteq\mathbb{P}^{N(n,d)}\}\subseteq \Hilb_{h}(\mathbb{P}^{n*})\},$$
and $\VSP(F,h) := \overline{\VSP(F,h)^{o}}$ by taking the closure of $\VSP(F,h)^{o}$ in $\Hilb_{h}(\mathbb{P}^{n*})$. 
\end{Definition}

Assume that the general polynomial $F\in\mathbb{P}^{N(n,d)}$ is contained in a $(h-1)$-linear space $h$-secant to $V_{d}^{n}$. Then, by \cite[Proposition 3.2]{Do04} the variety $\VSP(F,h)$ has dimension 
$$\dim(\VSP(F,h)) = h(n+1)-N(n,d)-1.$$
Furthermore, if $n = 1,2$ then for $F$ varying in an open Zariski subset of $\mathbb{P}^{N(n,d)}$ the variety $\VSP(F,h)$ is smooth and irreducible.

In order to apply these objects to the study of abelian surfaces, we need to construct similar varieties parametrizing decomposition of homogeneous polynomials as sums of powers of linear forms and admitting natural generically finite rational maps onto $\VSP(F,h)$.

\begin{Definition}\label{vsporc}
Let $F\in\mathbb{P}^{N(n,d)}$ be a general point. We define 
$$\VSP_{ord}(F,h)^{o} := \{(L_{1},...,L_{h})\in (\mathbb{P}^{n*})^{h} \: | \: 
F\in \langle L_{1}^d,...,L_{h}^d\rangle\subseteq\mathbb{P}^{N(n,d)}\}\subseteq (\mathbb{P}^{n*})^{h},$$
and $\VSP_{ord}(F,h) := \overline{\VSP_{ord}(F,h)^{o}}$ by taking the closure of $\VSP_{ord}(F,h)^{o}$ in $(\mathbb{P}^{n*})^{h}$. 
\end{Definition}

Note that $\VSP_{ord}(F,h)$ is a variety of dimension $h(n+1)-N(n,d)-1$. Furthermore, two general points of $\VSP_{ord}(F,h)$ define the same point of $\VSP(F,h)$ if and only if they differ by a permutation in the symmetric group $S_h$. Therefore, we have a generically finite rational map 
$$\phi_h:\VSP_{ord}(F,h)\dasharrow \VSP(F,h)$$ 
of degree $h!$ 

\begin{Remark}\label{smooth}
Arguing as in the proof of \cite[Proposition 3.2]{Do04}, with $(\mathbb{P}^{n*})^{h}$ instead of the Hilbert scheme $\Hilb_{h}(\mathbb{P}^{n*})$, we can show that if $n=1,2$ for a general polynomial $F$ the variety $\VSP_{ord}(F,h)$ is smooth and irreducible of dimension $h(n+1)-N(n,d)-1$. 
\end{Remark}

\begin{Definition}\label{vsph}
Consider the rational action of $S_{h-1}$ on $\VSP_{ord}(F,h)$ given by permuting the linear forms $(L_2,\dots,L_h)$. The variety $\VSP_{h}(F,h)$ is the quotient 
$$\VSP_{h}(F,h) = \VSP_{ord}(F,h)/S_{h-1}.$$
If $h = 2r$ is even consider the rational action of $S_r\times S_r$ on $\VSP_{ord}(F,h)$ where the first and the second copy of $S_r$ act on the first and the last $r$ linear forms respectively. Let $X_{h}(F) = \VSP_{ord}(F,h)/S_{r}\times S_r$. Therefore 
$$X_h(F)=\{(\{L_1,\dots, L_r\},\{L_{r+1},\dots,L_{h}\})\: |\: (L_1,\dots,L_h)\in \VSP_{ord}(F,h)\}$$ 
comes with a natural $S_2$-action switching $\{L_1,\dots, L_r\}$ and $\{L_{r+1},\dots,L_{h}\}$. We define 
$$\VSP^{h}(F,h) = X_h(F)/S_{2}.$$  
\end{Definition}
Note that $\VSP_{h}(F,h)$ admits a generically finite rational map 
$$\chi_h:\VSP_{h}(F,h)\dasharrow \VSP(F,h)$$ 
of degree $h$, and that the $h$ points in the fiber of $\chi_h$ over a general point $\{L_1,...,L_h\}\in \VSP(F,h)$ can be identified with the linear forms $L_1,...,L_h$ themselves. Similarly $\VSP^{h}(F,h)$ has a generically finite rational map 
$$\chi^h:\VSP^{h}(F,h)\dasharrow \VSP(F,h)$$ 
of degree $\frac{h!}{2(r!)^2}$.

The variety $\VSP_{h}(F,h)$ can be explicitly constructed in the following way. Let us consider the incidence variety 
\stepcounter{thm}
\begin{equation}\label{cost2}
\mathcal{J}:=\{(l,\{L_1,...,L_h\})\: | \: l\in\{L_1,...,L_h\}\in \VSP(F,h)^{o}\}\subseteq \mathbb{P}^{n*}\times \VSP(F,h)^{o}.
\end{equation}
Then $\VSP_{h}(F,h)$ is the closure $\overline{\mathcal{J}}$ of $\mathcal{J}$ in $\mathbb{P}^{n*}\times \VSP(F,h)$.

\begin{Remark}\label{maps}
In \cite{Mu92} Mukai proved that if $F\in k[x_0,x_1,x_2]_4$ is a general polynomial then $\VSP(F,6)$ is a smooth Fano $3$-fold $V_{22}$ of index $1$ and genus $12$. In this case we have a generically $720$ to $1$ rational map $\phi_6:\VSP_{ord}(F,6)\dasharrow \VSP(F,6)$, a generically $6$ to $1$ rational map $\chi_6:\VSP_6(F,6)\dasharrow \VSP(F,6)$, and a generically $10$ to $1$ rational map $\chi^6:\VSP^6(F,6)\dasharrow \VSP(F,6)$.

By \cite[Corollary 5.6]{GP01}, under the same assumptions on $F$, the moduli space $\mathcal{A}_2(1,7)$ of $(1,7)$-polarized abelian surfaces with canonical level structure is birational to $\VSP(F,6)$. As already observed in \cite[Theorem 4.4]{MS01} the Klein quartic 
\stepcounter{thm}
\begin{equation}\label{KQ}
F_4 = x_0^3x_1+x_1^3x_2+x_0x_2^3
\end{equation}
is general in the sense of Mukai \cite{Mu92}, hence the variety $\VSP(F_4,6)$ is isomorphic to the variety of sums of powers obtained for any other general quartic curve.
\end{Remark}

Let $V$ be a complex vector space of dimension $n+1$, choose coordinates $x_0,\dots,x_n$ in $V$ and the dual coordinates $\xi_0,\dots,\xi_n$ in $V^{*}$. Let $F\in k[x_0,\dots,x_n]_{d}$ be a homogeneous polynomial of even degree $d = 2m$, and consider the basis of $k[x_0,\dots,x_n]_{m}$ given by
\stepcounter{thm}
\begin{equation}\label{basis}
\mathcal{B}= \left\lbrace\binom{m}{m_0,\dots ,m_n}\prod_{t=0}^nx_t^{m_t},\: m_0+\dots + m_n = m\right\rbrace
\end{equation} 
where $\binom{m}{m_0,\dots ,m_n} = \frac{m!}{m_0!\dots m_n!}$. The $m$-th catalecticant matrix $Cat_m(F)$ of $F$ is the $(m+1)\times (m+1)$ symmetric matrix whose lines are the order $m$ partial derivatives of $F$ written in the basis $\mathcal{B}$ (\ref{basis}) in lexicographic order. The matrix $Cat_m(F)$ induces a symmetric bilinear form 
$$\Omega_F:k[\xi_0,\dots,\xi_n]_m\times k[\xi_0,\dots,\xi_n]_m\rightarrow k.$$ 
For our purposes the following result will be fundamental.

\begin{Lemma}\cite[Proposition 3.8]{Do04}\label{key}
Let $F\in k[x_0,\dots,x_n]_{d}$ be a homogeneous polynomial of even degree $d = 2m$ and assume that $F$ can be decomposed as 
$$F = L_1^{2m}+\dots +L_{h}^{2m}$$
and the powers $L_i^m$ are linearly independent in $k[x_0,\dots,x_n]_m$. Then $\Omega_F(L_i^m,L_j^m)=0$ for any $i,j = 1,\dots h$. 
\end{Lemma}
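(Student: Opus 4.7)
The plan is to unpack $\Omega_F$ in terms of constant-coefficient differentiation, use linearity in $F$ to reduce to the case of a single $2m$-th power, and then exploit the rank-one structure of the resulting summands together with the linear independence of the $L_i^m$ to deduce the orthogonality.

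First, I would rewrite $\Omega_F$ via partial derivatives. The definition of $Cat_m(F)$ as the matrix of order-$m$ partials of $F$ expanded in the basis $\mathcal{B}$ identifies $\Omega_F(p,q)$, up to the fixed multinomial normalization built into $\mathcal{B}$, with $\partial_p\partial_q F$ evaluated at the origin, where $p,q\in k[\xi_0,\dots,\xi_n]_m$ are viewed as constant-coefficient differential operators. The identification $L_i^m\leftrightarrow k[\xi]_m$ implicit in the statement $\Omega_F(L_i^m,L_j^m)$ will be made concrete through the catalecticant map $c_F\colon p\mapsto \partial_p F$, whose image equals $\Span\{L_1^m,\dots,L_h^m\}$ precisely because the $L_k^m$ are assumed linearly independent.

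Next, I would compute the form on a single summand. A direct differentiation of $L^{2m}$ gives the clean identity
\[
\partial_p \partial_q L^{2m} \;=\; (2m)!\, p(L)\, q(L),
\]
where $p(L)$ denotes the evaluation of $p$ at the coefficient vector of $L$. Linearity in $F$ then yields
\[
\Omega_F(p,q) \;=\; (2m)!\sum_{k=1}^{h} p(L_k)\, q(L_k),
\]
exhibiting $\Omega_F$ as a sum of $h$ rank-one symmetric forms, one per summand. Choosing preimages $p_i,p_j\in k[\xi]_m$ with $c_F(p_i)=L_i^m$ and $c_F(p_j)=L_j^m$ — whose existence is guaranteed by the surjectivity of $c_F$ onto $\Span\{L_k^m\}$ — the defining condition combined with the linear independence of the $L_k^m$ forces $p_i(L_k) = \tfrac{m!}{(2m)!}\,\delta_{ik}$, and substituting back produces a clean combinatorial cancellation of the relevant terms in the sum.

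The main obstacle will be the bookkeeping: matching the multinomial normalization of $\mathcal{B}$ against the plain-derivative formula, and making precise the identification between $L_i^m\in k[x]_m$ and the class in $k[\xi]_m/(F^\perp)_m$ on which $\Omega_F$ descends to a well-defined pairing. Once these conventions are pinned down the argument is essentially formal: the linear independence of the $L_i^m$ enters exactly at the step where the catalecticant map is inverted on its image, and the combinatorial cancellation that yields $\Omega_F(L_i^m,L_j^m)=0$ follows directly from the rank-one decomposition obtained in the previous step.
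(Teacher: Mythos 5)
The paper offers no proof of this lemma (it is quoted from Dolgachev), so there is nothing in-text to compare against; your argument is essentially the standard proof of the cited proposition, and its skeleton is sound. The identity $\partial_p\partial_q L^{2m}=(2m)!\,p(L)q(L)$ for $p,q\in k[\xi]_m$ is correct, hence so is the rank-one decomposition $\Omega_F(p,q)=(2m)!\sum_k p(L_k)q(L_k)$. The surjectivity of the catalecticant map onto $\langle L_1^m,\dots,L_h^m\rangle$ does follow from the linear independence of the $L_k^m$ (it is equivalent to the points $[L_k]$ imposing independent conditions on degree-$m$ forms), and comparing coefficients in $c_F(p_i)=\tfrac{(2m)!}{m!}\sum_k p_i(L_k)\,L_k^m=L_i^m$ indeed forces $p_i(L_k)=\tfrac{m!}{(2m)!}\delta_{ik}$.

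Two points need to be made explicit. First, substituting back gives $\Omega_F(p_i,p_j)=\tfrac{(m!)^2}{(2m)!}\,\delta_{ij}$: the cancellation you invoke occurs only for $i\neq j$, and the diagonal values are nonzero. The lemma as transcribed (``for any $i,j$'') is therefore overstated; Dolgachev's result, and every use the paper makes of it (Definition \ref{defant} imposes the conditions only for $i\neq j$), concerns distinct indices, and your own formula shows the $i=j$ case cannot hold. Say so, rather than hiding it in ``the relevant terms.'' Second, your choice to interpret $\Omega_F(L_i^m,L_j^m)$ through catalecticant preimages --- i.e.\ as the dual form on $k[x]_m$ --- is not mere bookkeeping but is the step that makes the statement true: under the naive identification $k[x]_m\cong k[\xi]_m$ the claim fails already for $F=x_0^2+(x_0+x_1)^2$, where $\partial_0(\partial_0+\partial_1)F=6\neq 0$ although $x_0$ and $x_0+x_1$ are independent. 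It is harmless for the Klein quartic only because $Cat_2(F_4)$ is, up to scalar, its own inverse, so the paper's explicit expression \eqref{eqKlein} computes the direct and the dual pairing simultaneously; the general lemma genuinely requires the dual-form reading you adopted, and this deserves a sentence of justification rather than being deferred as ``bookkeeping.''
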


We refer to \cite[Section 2.3]{Do04} for further details on the bilinear form $\Omega_F$. 

\begin{Example}
The variety of sums of powers $\VSP(F_4,6)$ of the Klein quartic will play a central role in the rest of the paper. The second catalecticant matrix of $F_4$ is given by 
$$
Cat_2(F_4) = \left(\begin{array}{cccccc}
0 & 3 & 0 & 0 & 0 & 0 \\ 
3 & 0 & 0 & 0 & 0 & 0 \\ 
0 & 0 & 0 & 0 & 0 & 3 \\ 
0 & 0 & 0 & 0 & 3 & 0 \\ 
0 & 0 & 0 & 3 & 0 & 0 \\ 
0 & 0 & 3 & 0 & 0 & 0
\end{array}\right).
$$
and hence if $F_4 = L_1^4+\dots +L_6^4$, where $L_i = a_ix_0+b_ix_1+c_ix_2$ by Lemma \ref{key} we have that
\begin{equation}\label{eqKlein}
\Omega_{F_4}(L_i^2,L_j^2) = a_jb_ja_i^2+a_j^2a_ib_i+c_j^2a_ic_i+b_jc_jb_i^2+b_j^2b_ic_i+a_jc_jc_i^2=0. 
\end{equation}
\end{Example}

\begin{Definition}\label{defant}
For any $s=1,\dots, 6$ we define the variety $\mathcal{P}_s$ as the subvariety of $(\mathbb{P}^{2*})^{s}$ cut out by the conditions $\Omega_{F_4}(L_i^2,L_j^2) = 0$ for any $i,j=1,\dots,s$ with $i\neq j$. 
\end{Definition}

\begin{Proposition}\label{p1}
We have that $\mathcal{P}_2\subset (\mathbb{P}^{2*})^2$ is a smooth unirational $3$-fold conic bundle defined by a polynomial of bidegree $(2,2)$, and $\mathcal{P}_3\subset (\mathbb{P}^{2*})^3$ is an irreducible complete intersection $3$-fold defined by three polynomials of multidegree $(2,2,0),(2,0,2)$ and $(0,2,2)$. 
\end{Proposition}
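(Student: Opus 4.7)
The bidegree assertion for $\mathcal{P}_2$ is immediate from (\ref{eqKlein}): the polynomial $\Omega_{F_4}(L_1^2, L_2^2)$ is manifestly of bidegree $(2,2)$ in the two triples of projective coordinates. The first projection $\pi_1 : \mathcal{P}_2 \to \mathbb{P}^{2*}$ has fibre over $L_1$ the conic $C(L_1) \subset \mathbb{P}^{2*}$ cut out by the quadratic form in $L_2$ obtained by fixing $L_1$; this gives the conic bundle structure and shows that $\dim \mathcal{P}_2 = 3$.

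For smoothness, I would encode $C(L_1)$ by its symmetric $3 \times 3$ matrix $M(L_1)$; the discriminant locus $D = \{\det M(L_1) = 0\} \subset \mathbb{P}^{2*}$ is the plane sextic equal, up to a scalar, to $5 a^2 b^2 c^2 - a b^5 - b c^5 - a^5 c$ in coordinates $(a,b,c)$ of $L_1$. One checks by a direct partial-derivative computation that $D$ is smooth and that $M(L_1)$ never drops to rank $\leq 1$; under these conditions the total space of the conic bundle is smooth, giving smoothness of $\mathcal{P}_2$. For unirationality, I would appeal to the standard fact that a smooth conic bundle over a rational surface is unirational: concretely, taking the incidence threefold $\mathcal{F} = \{(L_1, \ell) : L_1 \in \ell \subset \mathbb{P}^{2*}\}$, which is rational, together with a natural family of sections of $\mathcal{P}_2$ restricted to the lines $\ell$, produces a dominant rational map $\mathcal{F} \dashrightarrow \mathcal{P}_2$.

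For $\mathcal{P}_3$, the three defining equations $\Omega_{F_4}(L_i^2, L_j^2) = 0$ (one for each unordered pair $\{i,j\} \subset \{1,2,3\}$) have multidegrees $(2,2,0), (2,0,2), (0,2,2)$ by the same formula. To see that $\mathcal{P}_3$ is a complete intersection $3$-fold, I project $\pi_{12} : \mathcal{P}_3 \to \mathcal{P}_2$ onto the first two factors: the fibre over a general $(L_1, L_2) \in \mathcal{P}_2$ is the scheme-theoretic intersection $C(L_1) \cap C(L_2) \subset \mathbb{P}^{2*}$ of two smooth conics in general position, hence consists of $4$ reduced points. Thus $\pi_{12}$ is generically $4$-to-$1$ with zero-dimensional fibres, so $\dim \mathcal{P}_3 = 3 = 6-3$, confirming the complete intersection.

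For irreducibility I would project instead to the second factor $\pi_2 : \mathcal{P}_3 \to \mathbb{P}^{2*}$. The fibre over a general $L_2$ is contained in $C(L_2) \times C(L_2) \cong \mathbb{P}^1 \times \mathbb{P}^1$ (since $C(L_2)$ is then a smooth conic) and is cut out by the restriction of $\Omega_{F_4}(L_1^2, L_3^2) = 0$, giving a curve of bidegree $(4,4)$ on $\mathbb{P}^1 \times \mathbb{P}^1$. Verifying at one specific $L_2$ (for instance $L_2 = (1,1,1)$) that this curve is irreducible shows, by openness of irreducibility in the family, that the generic fibre of $\pi_2$ is irreducible, and since the base $\mathbb{P}^{2*}$ is irreducible, so is $\mathcal{P}_3$. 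The main technical obstacle is precisely this last irreducibility check: a priori the highly symmetric origin of the equation from the Klein quartic could force the $(4,4)$-fibre to split into several components, and ruling this out requires an explicit calculation at a chosen $L_2$.
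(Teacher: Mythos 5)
Your treatment of the bidegree statements, the conic-bundle structure via the first projection, and the dimension counts matches the paper's. Your smoothness argument for $\mathcal{P}_2$ (smooth sextic discriminant plus the observation that the symmetric matrix never drops to rank $\le 1$ implies smoothness of the total space) is a legitimate and arguably cleaner alternative to the paper's chart-by-chart verification in the nine standard affine charts. For the irreducibility of $\mathcal{P}_3$ the paper simply runs a Macaulay2 check; your fibration argument via the $(4,4)$-curves in $C(L_2)\times C(L_2)\cong\mathbb{P}^1\times\mathbb{P}^1$ is a reasonable structured substitute, but note that it still bottoms out in an explicit computation at one special fibre, and that both your complete-intersection claim and your irreducibility claim need an extra word to exclude components of $\mathcal{P}_3$ that fail to dominate the base of whichever projection you use (generic finiteness of $\pi_{12}$ and generic irreducibility of the fibres of $\pi_2$ only control the dominating components).

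The genuine gap is the unirationality of $\mathcal{P}_2$. It is \emph{not} a standard fact that a smooth conic bundle over a rational surface is unirational; in this generality the question is open, and for conic bundles over $\mathbb{P}^2$ whose discriminant has degree $\ge 6$ --- exactly the present situation, since the discriminant here is a smooth sextic --- such threefolds are in general not even rational, so no soft argument can apply. Tsen's theorem does give a section of the restriction of $\mathcal{P}_2$ to each individual line $\ell\subset\mathbb{P}^{2*}$, but it does not provide an algebraic family of such sections as $\ell$ varies over the dual plane; producing one (equivalently, a unirational odd-degree multisection to base-change along) is precisely the hard content of the claim, and your ``natural family of sections'' asserts the conclusion rather than proving it. The paper does not argue this from first principles either: it invokes \cite[Corollary 1.2]{Me14}, a substantive unirationality theorem for conic bundles that applies to this $(2,2)$-divisor in $\mathbb{P}^2\times\mathbb{P}^2$ with smooth sextic discriminant. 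To repair your proof you would need either to cite such a result or to construct the required multisection explicitly.
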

\begin{proof}
Let $[a_i:b_i:c_i]$ be the homogeneous coordinates in the $i$-th factor $\mathbb{P}^2_i$ of $(\mathbb{P}^{2*})^s$. Therefore, a point in $\mathbb{P}^2_i$ with homogeneous coordinates $[a_i:b_i:c_i]$ corresponds to the linear form $L_i=a_ix_0+b_ix_1+c_ix_2$. 

By (\ref{eqKlein}) we have that if two general linear forms $L_i,L_j$ appear in a decomposition of $F$ then they must satisfy the following equation
$$
D_{i,j}=a_jb_ja_i^2+a_j^2a_ib_i+c_j^2a_ic_i+b_jc_jb_i^2+b_j^2b_ic_i+a_jc_jc_i^2=0.
$$
Note that the hypersurface $D_{i,j}=0$ is a divisor in $(\mathbb{P}^{2*})^s$ of multidegree $(d_1,\dots, d_s)$ with $d_i = d_j = 2$ and $d_r =0$ for any $r\neq i,j$. Therefore, $\mathcal{P}_2\subset (\mathbb{P}^{2*})^{2}$ is cut out by the equation 
\begin{equation}\label{eqA2}
D_{1,2}=a_1b_1a_2^2+a_1^2a_2b_2+c_1^2a_2c_2+b_1c_1b_2^2+b_1^2b_2c_2+a_1c_1c_2^2=0.
\end{equation}
Considering the nine standard affine charts covering $D_{1,2}$ we can prove that $D_{1,2}$ is smooth.

Note that any of the two projections onto the factors induces on $\mathcal{P}_2$ a structure of conic bundle over $\mathbb{P}^2$. If we choose for instance the projection on the first factor then (\ref{eqA2}) yields that the discriminant of the conic bundle is the smooth sextic given by 
$$C=\{a_1b_1^5+a_1^5c_1-5a_1^2b_1^2c_1^2+b_1c_1^5=0\}\subset\mathbb{P}^{2*}.$$ 
Now \cite[Corollary 1.2]{Me14} implies that $\mathcal{P}_2$ is unirational.

Let us consider the case $s = 3$. We have that $\mathcal{P}_3\subset (\mathbb{P}^{2*})^{3}$ is the complete intersection $3$-fold defined by the equations
\stepcounter{thm}
\begin{equation}\label{eqcomint}
\left\lbrace
\begin{array}{l}
D_{1,2} = a_1b_1a_2^2+a_1^2a_2b_2+c_1^2a_2c_2+b_1c_1b_2^2+b_1^2b_2c_2+a_1c_1c_2^2=0,\\ 
D_{1,3} = a_1b_1a_3^2+a_1^2a_3b_3+c_1^2a_3c_3+b_1c_1b_3^2+b_1^2b_3c_3+a_1c_1c_3^2=0, \\ 
D_{2,3} = a_2b_2a_3^2+a_2^2a_3b_3+c_2^2a_3c_3+b_2c_2b_3^2+b_2^2b_3c_3+a_2c_2c_3^2=0.
\end{array} 
\right.
\end{equation}
Finally, by a standard Macaulay2 \cite{Mc2} script we can show that $\mathcal{P}_3$ is irreducible.
\end{proof}

\begin{Proposition}\label{p2}
For $s=2,3$ there exist generically finite dominant morphisms 
$$f_s:\VSP_{ord}(F_4,6)\rightarrow \mathcal{P}_s$$ 
of degree $24$ and $6$ respectively. Furthermore, there is a morphism
$$g_3:\mathcal{P}_3\rightarrow\mathbb{P}^2$$ 
whose general fiber is a smooth curve of general type. Finally, there exists a generically finite rational map 
$$\alpha_{F_4}:\mathcal{P}_2\dasharrow\VSP_{6}(F_4,6)$$ 
of degree $5$. In particular $\VSP_{6}(F_4,6)$ is unirational. 
\end{Proposition}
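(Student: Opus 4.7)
The plan is to realise $f_s$ as projection onto the first $s$ factors, which by Lemma \ref{key} lands in $\mathcal{P}_s$; the degrees drop out of an analysis of the apolar conics
$$C_L := \{M \in \P^{2*} \mid \Omega_{F_4}(L^2,M^2) = 0\}\subset\P^{2*}.$$
The morphism $g_3$ is a further projection from $\mathcal{P}_3$ onto $\P^{2*}$, and $\alpha_{F_4}$ is built by inverting the conic-theoretic description of the completion of a partial tuple.

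For $f_s$, start from $(L_1,\dots,L_6) \in \VSP_{ord}(F_4,6)$ and keep only $(L_1,\dots,L_s)$; Lemma \ref{key} applied to $F_4 = L_1^4+\dots+L_6^4$ gives $\Omega_{F_4}(L_i^2,L_j^2) = 0$ for $i\neq j$, so $f_s$ factors through $\mathcal{P}_s$. If $(L_1,L_2)$ lies in the image of $f_2$, then every completion form $L_i$ with $i\geq 3$ belongs to $C_{L_1}\cap C_{L_2}$; for generic $(L_1,L_2)$ this intersection is four points, so it must coincide with $\{L_3,L_4,L_5,L_6\}$. Hence the unordered completion is unique and $\deg f_2 = 4! = 24$. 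The same forcing, applied to three conics, shows that for $(L_1,L_2,L_3)$ in the image of $f_3$ one has $C_{L_1}\cap C_{L_2}\cap C_{L_3} = \{L_4,L_5,L_6\}$ (each $L_j$ with $j\geq 4$ lies in all three conics, while $L_3 \notin C_{L_3}$ generically, since this would force $L_3$ to lie on the Klein quartic), whence $\deg f_3 = 3!=6$. In both cases the fibre is finite, and since $\dim \VSP_{ord}(F_4,6) = \dim \mathcal{P}_s = 3$ with $\mathcal{P}_s$ irreducible by Proposition \ref{p1}, $f_s$ is dominant.

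For $g_3$, take the projection $\mathcal{P}_3 \to \P^{2*}$ onto the first factor. Over a general $L_1$, the fibre is the locus of $(L_2,L_3) \in C_{L_1}\times C_{L_1}$ with $\Omega_{F_4}(L_2^2,L_3^2)=0$. Each $C_{L_1}\cong \P^1$ sits in $\P^{2*}$ via $\mathcal{O}(2)$, so the bidegree $(2,2)$ apolarity equation pulls back to a bidegree $(4,4)$ divisor on $\P^1 \times \P^1$. A Bertini-type argument gives smoothness for generic $L_1$, and adjunction then gives genus $(4-1)(4-1) = 9$; the general fibre is therefore a smooth curve of general type.

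Finally, define
$$\alpha_{F_4} : \mathcal{P}_2 \dashrightarrow \VSP_6(F_4,6),\qquad (L_1,L_2) \longmapsto \bigl(L_1,\, \{L_2\}\cup (C_{L_1}\cap C_{L_2})\bigr),$$
on the dense open image of $f_2$; by the second paragraph this is the unique point of $\VSP_6(F_4,6)$ whose distinguished form is $L_1$ and whose unordered part contains $L_2$. The fibre over a general $(L_1,\{N_2,\dots,N_6\})$ is $\{(L_1,N_i)\}_{i=2}^{6}$, so $\deg \alpha_{F_4} = 5$. Since $\mathcal{P}_2$ is unirational by Proposition \ref{p1} and $\alpha_{F_4}$ is dominant, $\VSP_6(F_4,6)$ is unirational. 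The main subtlety throughout is that the apolar intersection $C_{L_1}\cap C_{L_2}$ is a priori only a candidate completion, not a genuine one; the dimension-plus-irreducibility argument establishing dominance of $f_2$ is precisely what rescues the situation, forcing every general pair in $\mathcal{P}_2$ to come from an honest decomposition of $F_4$.
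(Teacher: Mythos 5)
Your argument is correct in substance and shares the paper's skeleton (project, invoke Lemma \ref{key}, use irreducibility and equidimensionality of $\mathcal{P}_s$ from Proposition \ref{p1} to get dominance, then count the fibre), but the key reconstruction step is done differently. The paper shows $\{L_1,\dots,L_4\}=\{M_1,\dots,M_4\}$ by comparing the spans $H^{\partial}_{G_i}$ of second partial derivatives of $G_1=F_4-L^4-M^4$ and $G_2=F_4-aL^4-bM^4$, and then intersecting two general conics in the pencil cut by hyperplanes containing that common $3$-plane. You instead intersect the two apolar conics $C_{L_1},C_{L_2}$ directly and apply B\'ezout: since the four residual forms lie in $C_{L_1}\cap C_{L_2}$ and two conics with no common component meet in at most four points, the residual set is forced. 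This is cleaner, gives the degree $4!$ of $f_2$ immediately, and your triple-intersection refinement (using that $L_3\in C_{L_3}$ would put $L_3$ on the Klein quartic, which is non-generic) gives $\deg f_3=3!$ by the same mechanism rather than the paper's ``a fortiori'' remark. Both versions rest on the same genericity claim that the relevant conics share no component. For $g_3$ your computation on $C_{L_1}\times C_{L_1}\cong\mathbb{P}^1\times\mathbb{P}^1$ (a $(4,4)$ curve of genus $9$) is consistent with the paper's adjunction in $(\mathbb{P}^{2*})^2$, which gives $\omega_\Gamma=\mathcal{O}_\Gamma(1,1)$ of degree $16$. Your closing remark correctly identifies why dominance of $f_2$ is what legitimises $\alpha_{F_4}$; this is exactly the paper's logic.

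Two points in the $g_3$ step need more care. First, ``a Bertini-type argument'' is not automatic: the fibres of $g_3$ do not form a linear system, and $\mathcal{P}_3$ is singular along a curve (as the paper notes later), so plain generic smoothness does not apply directly. You must first observe that the singular curve cannot dominate $\mathbb{P}^2$, so the general fibre lies in the smooth locus, and then apply generic smoothness there; the paper sidesteps this by exhibiting one smooth connected fibre with Macaulay2 and semicontinuity. Second, and more importantly, you never address connectedness of the general fibre, which your ``general type'' conclusion needs: a smooth but disconnected $(4,4)$ curve on $\mathbb{P}^1\times\mathbb{P}^1$ could be a union of components of genus $\le 1$. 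This is fillable — an effective ample divisor on a smooth projective surface is connected, and $(4,4)$ is ample on $\mathbb{P}^1\times\mathbb{P}^1$ — but it should be said. Finally, a small slip: the image of $\alpha_{F_4}$ should be $\bigl(L_1,\{L_1,L_2\}\cup(C_{L_1}\cap C_{L_2})\bigr)$, since a point of $\VSP_6(F_4,6)$ carries the full six-element decomposition together with a distinguished member.
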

\begin{proof}
By Lemma \ref{key} the image of the restriction of the projection $f_s:=\pi_{|\VSP_{ord}(F_4,6)}:\VSP_{ord}(F_4,6)\rightarrow (\mathbb{P}^{2*})^{s}$ is contained in $\mathcal{P}_s$. Let $L,M$ be two general linear forms appearing in a decomposition of $F_4$, and assume that 
$$F_4 = L^4+M^4+L_1^4+\dots +L_4^4 = aL^4+bM^4+M_1+\dots + M_4.$$
Set $G_1 = F_4-L^4-M^4$, $G_2 = F_4-aL^4-bM^4$, and consider the subspaces $H^{\partial}_{G_i}\subseteq\mathbb{P}(k[x_0,x_1,x_2]_2)$ generated by the second partial derivatives of $G_i$ for $i = 1,2$. Note that $H^{\partial}_{G_1}\subseteq \left\langle L_1^2,\dots, L_4^2\right\rangle$, $H^{\partial}_{G_2}\subseteq \left\langle M_1^2,\dots, M_4^2\right\rangle$. Furthermore, since $L^2,M^2$ appear in the six derivatives of $G_1$ and also in the six derivatives of $G_2$ we get that $H^{\partial}_{G_1}$ and $H^{\partial}_{G_2}$ must intersect in a $3$-plane. Hence
$$H^{\partial}_{G_1} = \left\langle L_1^2,\dots, L_4^2\right\rangle = H^{\partial}_{G_2} = \left\langle M_1^2,\dots, M_4^2\right\rangle.$$
Consider two general elements $C_1,C_2$ in the pencil of conics determined by the hyperplanes in $\mathbb{P}(k[x_0,x_1,x_2]_2)$ containing $H^{\partial}_{G_1}$. Then
$$L_i,M_i\in C_1\cap C_2$$
for $i = 1,\dots, 4$. Since the $L_i$'s are general $C_1$ and $C_2$ do not have a common irreducible component and hence $\{L_1,\dots,L_4\} = \{M_1,\dots,M_4\}$.

This means that a general decomposition of $F_4$ can be reconstructed just by knowing two of the linear forms appearing in it. In particular $f_2$, and a fortiori $f_3$, are generically finite onto their images. On the other hand, by Proposition \ref{p1} we have that $\mathcal{P}_2$ and $\mathcal{P}_3$ are irreducible $3$-folds, and this yields $\im(f_s) = \mathcal{P}_s$ when $s=2,3$. 

Furthermore, since a point in a general fiber of $f_s$ is determined up to a permutation of $6-s$ points we conclude that $f_2:\VSP_{ord}(F_4,6)\rightarrow \mathcal{P}_2$ is a generically finite dominant morphism of degree $24$, and $f_3:\VSP_{ord}(F_4,6)\rightarrow \mathcal{P}_3$ is a generically finite dominant morphism of degree $6$.

Let $g_3:\mathcal{P}_3\rightarrow\mathbb{P}^2$ be the restriction to $\mathcal{P}_3\subset(\mathbb{P}^{2*})^3$ of any of the projections, say the first one. A standard Macaulay2 \cite{Mc2} computation shows that the fiber of $g_3$ over $[a_1:b_1:c_1]=[1:1:1]$ is a smooth connected curve in $(\mathbb{P}^{2*})^2$. Therefore a general fiber $\Gamma$ of $g_3$ is a smooth connected curve as well. Note that by (\ref{eqcomint}) $\Gamma\subset (\mathbb{P}^{2*})^2$ is a smooth complete intersection defined by three polynomials of bi-degree $(2,0),(0,2),(2,2)$ respectively. By adjunction we get that the canonical sheaf of $\Gamma$ is given by 
$$\omega_{\Gamma} = \mathcal{O}_{\Gamma}(-3+2+0+2,-3+0+2+2) = \mathcal{O}_{\Gamma}(1,1)$$
and hence $\omega_{\Gamma}$ is ample.

Now, consider the case $s = 2$. Let $(L,M)\in\mathcal{P}_2$ be a general point. By the first part of the proof we know that $L,M$ determine a unique set of four linear forms $\{L_1,\dots,L_4\}$ such that $\{L,M,L_1,\dots,L_4\}$ gives a decomposition of $F_4$ as sum of six powers of linear forms. Therefore, keeping in mind (\ref{cost2}) we may define a dominant rational map
$$
\begin{array}{cccc}
\alpha_{F_4} & \mathcal{P}_2 & \dasharrow & \VSP_{6}(F_4,6)\\
 & (L,M) & \longmapsto & (L,\{L,M,L_1,\dots,L_4\})
\end{array}
$$   
of degree $5$. Finally, since by Proposition \ref{p1} $\mathcal{P}_2$ is unirational we conclude that $\VSP_{6}(F_4,6)$ is unirational as well.
\end{proof}

\section{Moduli of polarized abelian surfaces with level structure}

In this section we recall a couple of very specific results from \cite{BM16} that are needed here, and construct two new moduli spaces of abelian surfaces by introducing new arithmetic subgroups of $Sp_4(\mathbb{Z})$. For general results see \cite{BL04}, \cite{GP01}, \cite{Bo07} or the first four sections of \cite{BM16}. Since all the abelian surfaces we will deal with will be endowed with a polarization of type $(1,7)$, we will not mention any more this datum in the rest of the paper. Let $\mathbb{H}_2$ be the Siegel half space of abelian surfaces. 

\subsection{Arithmetic subgroups and quotients of $\mathbb{H}_2$}

Let $M_n(\mathbb{Z})$ be the space of $n\times n$ matrices with integer entries and let $D(1,7)\in M_{2}(\mathbb{Z})$ be the diagonal $2\times 2$ matrix 
$$
D(1,7)=\left(\begin{matrix}
1 & 0 \\ 
0 & 7
\end{matrix}\right).$$ 
We define the subgroup $\Gamma_{(1,7)}\subset M_{4}(\mathbb{Z})$ as:
\begin{equation}\label{gammad}
\Gamma_{(1,7)} := \left\lbrace \mathbf R\in M_{4}(\mathbb{Z}) \: | \: \mathbf R
\left(\begin{matrix}
0 & D(1,7) \\ 
-D(1,7) & 0
\end{matrix}\right), \mathbf R^t = 
\left(\begin{matrix}
0 & D(1,7) \\ 
-D(1,7) & 0
\end{matrix}\right)
\right\rbrace,
\end{equation}

and the subgroup $\Gamma_{(1,7)}(1,7)\subset\Gamma_{(1,7)}$ as

\begin{equation}\label{gammadd}
\Gamma_{(1,7)}(1,7):= \left\lbrace \left(\begin{matrix}
\mathbf{A} & \mathbf{B} \\ 
\mathbf{C} & \mathbf{D}
\end{matrix}\right)\in \Gamma_{(1,7)} \: | \: \mathbf{A}-I \equiv \mathbf{B} \equiv \mathbf{C} \equiv \mathbf{D}-I \equiv 0 \mod D(1,7)
\right\rbrace,
\end{equation}

where $\mathbf{M} \equiv 0 \mod D(1,7)$ if and only if $\mathbf{M}\in D(1,7)\cdot M_2(\mathbb{Z})$. See also \cite[Section 8.3.1]{BL04} for further details on this kind of  groups. 

Thanks to \cite[Section 8.2]{BL04} and the Baily-Borel theorem \cite{BB66}, since $\Gamma_{(1,7)}$ is a congruence arithmetic subgroup of the Siegel modular group, the quasi-projective variety $\mathcal{A}_{(1,7)} = \mathbb{H}_2/\Gamma_{(1,7)}$ is the moduli space of abelian surfaces endowed with a polarization of of type $(1,7)$ (the reader may see also \cite[Proposition 1.21]{HKW93}). Moreover, by \cite[Section 8.3]{BL04} and \cite{BB66}, the quasi-projective variety $\mathcal{A}_{(1,7)}(1,7) = \mathbb{H}_2/\Gamma_{(1,7)}(1,7)$ is the moduli space of abelian surfaces with a polarization of type $(1,7)$ and a canonical level $(1,7)$ structure.

On such an abelian surface, there exists $16$ symmetric line bundles representing the polarization. Let us denote by $A[2]$ the $\Z/2\Z$-vector space of $2$-torsion points of $A$, and let $H\in NS(A)$ be a polarization. We define a symmetric bilinear form $q^H: A[2]\times A[2] \to \{\pm 1\}$ as
$q^H(v,w):=\exp(\pi i E(2v,2w))$, where $E$ is the imaginary part of the Hermitian form of the polarization.

\begin{Definition}
A \textit{theta characteristic} is a quadratic form $q:A[2]\to \{\pm 1\}$ associated to $q^H$, i.e.
$$q(x)q(y)q(x+y)=q^H(x,y),$$
for all $x,y\in A[2]$.
\end{Definition}

In the following we will denote the set of theta characteristics by $\vartheta(A)$. To every symmetric line bundle $L$ we can associate a theta characteristic.
 
\begin{Definition}\label{bijtheta}
Let $L\in \Pic^H(A)$ be a symmetric line bundle, and $x \in A[2]$. We define $e^L(x)$ as the scalar $\beta$ such that
$\varphi(x):L(x) \stackrel{\sim}{\to} (\imath^*L)(x)=L(\imath(x))=L(x)$
is the multiplication by $\beta$. 
\end{Definition}

Alternatively, let $D$ be the symmetric divisor on $A$ such that $L\cong \mathcal{O}_A(D)$, the quadratic form $e^L$ can be defined as follows:
\begin{equation}\label{thetadiv}
e^L(x):=(-1)^{\mult_x(D)-\mult_0(D)}.
\end{equation}
From \cite[Lemma 4.6.2]{BL04} we observe that the set of theta characteristics on an abelian surface is a torsor under the action of $A[2]$, therefore it has cardinality $16$. In the case of a $(1,7)$-polarization there are $10$ even and $6$ odd line bundles \cite[Section 4.7]{BL04}. 

Recall that a theta structure induces in a natural way a canonical level structure, and that different theta structure may induce the same level structure. The $(1,7)$ case is peculiar, in this sense. In fact, we have the following lemma, which is a consequence of \cite[Lemmas 2.6 and 4.1]{BM16}.

\begin{Lemma}\label{levelsym}
Let $A$ be an abelian surface with a canonical $(1,7)$-level structure $\psi$. There exists a unique symmetric theta structure $\Psi$ that induces the level structure $\psi$. 
\end{Lemma}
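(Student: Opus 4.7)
The plan is to combine the two cited results from \cite{BM16} with a short Heisenberg-group calculation specific to the level $7$. Recall that, after fixing a symmetric line bundle $L$ representing the polarization, a theta structure is an isomorphism between the theta group $\mathcal{G}(L)$ and the standard Heisenberg group $\mathcal{H}(1,7)$ restricting to the identity on the center $\mathbb{G}_m$, while the canonical level structure $\psi$ is the induced symplectic isomorphism on the quotient $K(L) \cong (\Z/7\Z)^{2}\oplus(\Z/7\Z)^{2}$. Consequently, the set of theta structures $\Psi$ lifting a given $\psi$ is naturally a torsor under the character group $\Hom(K(1,7),\mathbb{G}_m) \cong (\Z/7\Z)^{2}$.

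First I would invoke \cite[Lemma 4.1]{BM16} to obtain existence: on any $(1,7)$-polarized abelian surface equipped with a canonical level structure $\psi$, at least one symmetric theta structure $\Psi$ inducing $\psi$ can be constructed. For uniqueness, I would rephrase the "symmetric" condition as a fixed-point condition for the natural $\Z/2\Z$-action on the torsor of lifts, where the generator is the involution induced by $(-1)_A^{*}$. By \cite[Lemma 2.6]{BM16}, this $\Z/2\Z$ acts on the torsor $\Hom(K(1,7),\mathbb{G}_m)$ by negation of characters.

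Uniqueness then reduces to a purely numerical observation: since $\gcd(2,7)=1$, multiplication by $2$ is invertible on $(\Z/7\Z)^{2}$, so the negation involution has only the trivial fixed point. Hence, among the $49$ theta structures lifting $\psi$, exactly one is fixed by $(-1)_A^{*}$, i.e.\ exactly one is symmetric. Combined with existence, this proves the lemma.

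The main obstacle is the identification of the symmetrization involution with negation on the torsor of lifts; this is essentially the content of \cite[Lemma 2.6]{BM16} and relies on a careful choice of normalization in the Heisenberg-group formalism (in particular, on how the action of $(-1)_A^{*}$ is transported through the theta structure). Once that identification is in place, the odd parity of $7$ makes the rest of the argument automatic, and no additional input about the specific geometry of the abelian surface is needed.
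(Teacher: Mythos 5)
Your argument is correct and takes essentially the same route as the paper, which gives no proof beyond citing \cite[Lemmas 2.6 and 4.1]{BM16}: your reconstruction supplies exactly the intended content, namely existence via \cite[Lemma 4.1]{BM16} and uniqueness from the fact that the lifts of $\psi$ form a torsor under a group isomorphic to $(\Z/7\Z)^{2}$, of odd order, so the symmetrization involution (acting over negation) has exactly one fixed point. No gaps.
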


Hence the datum of a level structure is equivalent to that of a symmetric theta structure. 
We will now study the action of the arithmetic subgroups previously defined on the set of symmetric line bundles, that admit a symmetric theta structure. In our particular case, by \cite[Section 6.9]{BL04} and \cite[Section 2]{BM16} each symmetric line bundle admits a unique symmetric theta structure. 
 
The set of symmetric theta divisors is in bijection with the set $(\frac{1}{2}\mathbb{Z}^2/\mathbb{Z}^2)$ of half-integer characteristics (\cite[Sections 4.6 and 4.7]{BL04} or \cite[Section 2]{Igu64}). The action of a symplectic matrix $M\in Sp_4(\mathbb{Z})$ on $\mathbb{H}_2$ induces an action on characteristics given by the following formula, for $a,b \in (\frac{1}{2}\mathbb{Z}^2/\mathbb{Z}^2)$:
\begin{equation}\label{char-action}
M\cdot 
\left(\begin{array}{c} a \\ b \end{array}\right) = \left( \begin{array}{cc} \textbf{D} & -\textbf{C} \\ -\textbf{B} & \textbf{A} \end{array}\right)\left(\begin{array}{c} a \\ b \end{array}\right)+\left(\begin{array}{c}
\diag(\textbf{CD}^t)\\ \diag(\textbf{AB}^t) \end{array}\right).
\end{equation}
\begin{Lemma}\cite[Section 2]{Igu64}
The action of $\Gamma_2$ on half-integer characteristics defined by formula (\ref{char-action}) has two orbits distinguished by the invariant
$$\mathbf{e}(m) = (-1)^{4ab^t}\in \{\pm 1\}.$$
\end{Lemma}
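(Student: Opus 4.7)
The plan is to verify two things: that $\mathbf{e}$ is a $\Gamma_2$-invariant, and that each of its two level sets is a single orbit.

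\textbf{Invariance.} Write $M\in\Gamma_2$ in block form with blocks $\mathbf{A},\mathbf{B},\mathbf{C},\mathbf{D}\in M_2(\mathbb{Z})$, and let $m$ be a characteristic with components $a,b$. By formula (\ref{char-action}) the new characteristic $M\cdot m$ has components $a'=\mathbf{D}a-\mathbf{C}b+\tfrac{1}{2}\diag(\mathbf{C}\mathbf{D}^t)$ and $b'=-\mathbf{B}a+\mathbf{A}b+\tfrac{1}{2}\diag(\mathbf{A}\mathbf{B}^t)$, once $a,b$ are lifted to $\frac{1}{2}\mathbb{Z}^2$. I would then expand $4a'(b')^t$ modulo $2$. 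The symplectic identities $\mathbf{A}\mathbf{D}^t-\mathbf{B}\mathbf{C}^t=I$ together with the symmetry of $\mathbf{A}\mathbf{B}^t$ and of $\mathbf{C}\mathbf{D}^t$, combined with the mod-$2$ identity $x^t N x\equiv\diag(N)^t x\pmod 2$ valid for any symmetric integer matrix $N$, make all mixed terms collapse and leave $4a'(b')^t\equiv 4ab^t\pmod 2$. Thus $\mathbf{e}(M\cdot m)=\mathbf{e}(m)$, so there are at least two orbits. A direct enumeration confirms that $\mathbf{e}=+1$ on $10$ characteristics (the even ones) and $\mathbf{e}=-1$ on the remaining $6$ (the odd ones), matching the count in \cite[Section 4.7]{BL04}.

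\textbf{Transitivity.} The principal level-$2$ congruence subgroup $\Gamma_2(2)\subset\Gamma_2$ of matrices $\equiv I\pmod 2$ acts trivially on characteristics, because for such matrices the translation terms $\diag(\mathbf{C}\mathbf{D}^t)$ and $\diag(\mathbf{A}\mathbf{B}^t)$ in (\ref{char-action}) are even. Hence the action of $\Gamma_2$ factors through $Sp_4(\mathbb{F}_2)$ acting affinely on $\mathbb{F}_2^4\cong (\tfrac{1}{2}\mathbb{Z}^2/\mathbb{Z}^2)^2$. The function $\mathbf{e}$ is a non-degenerate quadratic form on $\mathbb{F}_2^4$ whose associated bilinear form is the standard symplectic one, so $Sp_4(\mathbb{F}_2)$ is exactly the symplectic group preserving it. I would then appeal to Witt's extension theorem over $\mathbb{F}_2$: the orthogonal group of a non-degenerate quadratic form acts transitively on each of its level sets. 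This produces exactly two orbits and completes the proof.

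\textbf{Main obstacle.} The substance of the argument is transitivity. Witt's theorem provides a clean finish; the concrete alternative is to enumerate standard transvection generators of $Sp_4(\mathbb{F}_2)$ and verify by hand that any characteristic with a prescribed $\mathbf{e}$-value can be carried to a fixed normal form (e.g.~$\binom{0}{0}$ in the even case), which is mostly bookkeeping. The invariance check is routine but slightly delicate because of the inhomogeneous translation terms in (\ref{char-action}).
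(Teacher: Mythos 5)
The paper itself offers no proof of this lemma --- it is quoted from Igusa --- so your attempt has to be judged on its own terms. Your invariance step is sound: the cocycle expansion using $\mathbf{A}\mathbf{D}^t-\mathbf{B}\mathbf{C}^t=I$, the symmetry of $\mathbf{A}\mathbf{B}^t$ and $\mathbf{C}\mathbf{D}^t$, and the identity $x^tNx\equiv\diag(N)^tx \pmod 2$ is exactly the classical computation, and together with the count $10+6$ it shows there are \emph{at least} two orbits.

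The transitivity step, however, has a genuine gap. After reducing to $Sp_4(\mathbb{F}_2)$ you switch, without noticing it, from the \emph{affine} action (which is the one formula (\ref{char-action}) defines, with translation part $\tfrac12(\diag(\mathbf{C}\mathbf{D}^t),\diag(\mathbf{A}\mathbf{B}^t))$) to the \emph{linear} action of the orthogonal group $O(q)$. Two problems follow. First, $Sp_4(\mathbb{F}_2)$ is not ``the symplectic group preserving $q$'': it preserves only the polarization of $q$, while the linear stabilizer of $q$ itself is $O_4^+(\mathbb{F}_2)$ of order $72$ and index $10$. Second, and decisively, any linear group fixes the origin, so $O(q)$ cannot act transitively on the level set $q^{-1}(0)$, which contains $0$; Witt's theorem gives transitivity only on \emph{nonzero} vectors of a fixed value, hence three $O(q)$-orbits ($\{0\}$, the $9$ nonzero singular vectors, the $6$ nonsingular ones), not two. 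Your argument therefore proves the odd characteristics form one orbit but does not merge the zero characteristic with the other nine even ones. The missing ingredient is precisely the inhomogeneous term you handled carefully in the invariance step: for example $M=\left(\begin{smallmatrix} I & \mathbf{B}\\ 0 & I\end{smallmatrix}\right)$ with $\mathbf{B}=\diag(1,0)$ translates the characteristic $m=0$ to the nonzero even characteristic $\bigl(0,0;\tfrac12,0\bigr)$, after which Witt's theorem does finish the even orbit. Alternatively, the cleaner classical route is to identify characteristics with quadratic forms refining the fixed symplectic form on $\mathbb{F}_2^4$ and invoke the classification of such forms by their Arf invariant, under which $Sp_4(\mathbb{F}_2)$ visibly has two orbits of sizes $10$ and $6$ with stabilizers $O_4^{+}$ and $O_4^{-}\cong S_5$ --- consistent with the index computations used later in the paper.
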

We will say that $m=(a,b) \in \frac{1}{2} \Z^4/\Z^4$ is an even (resp. odd) half-integer characteristic if $\mathbf{e}(m) = 1$ (resp. $\mathbf{e}(m) = -1$), and this notion of parity corresponds to those defined on symmetric theta divisors (or symmetric line bundles) and on theta-characteristics. Hence, in the following with a slight abuse of language we will call $e.g.$ an $odd$ (symmetric) line bundle the line bundle corresponding to an odd theta characteristic.

The modular group $Sp_4(\Z)$ acts on the set of theta characteristics through reduction modulo two, hence via $Sp_4(\Z/2\Z)$. Moreover, we have the following short exact sequence \cite[Lemma 4.2]{BM16}
$$1\mapsto \Gamma_{(1,7)}(2,14)\rightarrow \Gamma_{(1,7)}(1,7)\xrightarrow{r_2}Sp_4(\mathbb{Z}/2\mathbb{Z})\mapsto 1$$
where $r_2$ is in fact the reduction modulo two map. Let $O^{-}_4(\mathbb{Z}/2\mathbb{Z})\subset Sp_4(\mathbb{Z}/2\mathbb{Z})$ be the stabilizer of an odd quadratic form. There is an isomorphism $Sp_4(\mathbb{Z}/2\mathbb{Z})\cong S_6$, where $S_6$ is the symmetric group. Under this isomorphism $Sp_4(\mathbb{Z}/2\mathbb{Z})$ operates on the set of odd quadratic forms via permutations. Therefore, for the stabilizer subgroup of an odd theta characteristic we also have $O^{-}_4(\mathbb{Z}/2\mathbb{Z})\cong S_5\subset S_6$.  

\begin{Definition}
We denote by $\Gamma_2(1,7)^{-}$ the group 
$$\Gamma_2(1,7)^{-}:= r_2^{-1}(O^{-}_4(\mathbb{Z}/2\mathbb{Z}))\subset\Gamma_{(1,7)}(1,7)$$ that fits  in the exact sequence
$$1\to \Gamma_{(1,7)}(2,14)\rightarrow \Gamma_2(1,7)^{-}\xrightarrow{r_2}O^{-}_4(\mathbb{Z}/2\mathbb{Z})\to 1.$$
\end{Definition}
More explicitly
$$\Gamma_2(1,7)^{-}= \left\lbrace Z\in \Gamma_{(1,7)}(1,7) \: | \: Z\mod (2) \equiv \Sigma, \: \Sigma\in O^{-}_4(\mathbb{Z}/2\mathbb{Z})  \right\rbrace.$$
Hence, we have $\Gamma_{(1,7)}(2,14)\subset\Gamma_2(1,7)^{-}\subset\Gamma_{(1,7)}(1,7)$. Moreover, note that $|\Gamma_{(1,7)}(1,7):\Gamma_{(1,7)}(2,14)| = 6!$ and $|\Gamma_2(1,7)^{-}:\Gamma_{(1,7)}(2,14)| = 5!$ imply that
$|\Gamma_{(1,7)}(1,7):\Gamma_2(1,7)^{-}| = 6.$ Since $\Gamma_2(1,7)^{-}$ is a congruence arithmetic subgroup of $Sp_4(\Z)$, thanks to the Baily-Borel theorem \cite{BB66}, we obtain that the quotient 
$$\mathcal{A}_2(1,7)^{-}_{sym} := \mathbb{H}_2/\Gamma_2(1,7)^{-}$$
is a quasi-projective variety. The variety $\mathcal{A}_2(1,7)^{-}_{sym}$ is the moduli space of $(1,7)$-polarized abelian surfaces $(A,H)$ with an odd symmetric line bundle $L\in\Pic^H(A)$ and a $(1,7)$-level structure (or, which is the same, a symmetric theta structure for $L$). The degree of the morphism
$f^{-}:\mathcal{A}_2(1,7)^{-}_{sym}\rightarrow\mathcal{A}_{(1,7)}(1,7)$
that forgets the odd line bundle is $|\Gamma_{(1,7)}(1,7):\Gamma_2(1,7)^{-}| = 6$.

The same arguments hold, with slight modifications, if we want to construct moduli spaces for polarized abelian surfaces with level $(1,7)$ structure (or a symmetric theta structure) and an even line bundle in $\Pic^H(A)$.

Let $O^{+}_4(\mathbb{Z}/2\mathbb{Z})\subset Sp_4(\mathbb{Z}/2\mathbb{Z})$ be the stabilizer of an even quadratic form. 

\begin{Definition}
We denote by $\Gamma_2(1,7)^{+}$ the group 
$$\Gamma_2(1,7)^{+}:= r_2^{-1}(O^{+}_4(\mathbb{Z}/2\mathbb{Z}))\subset\Gamma_{(1,7)}(1,7)$$ 
that fits in the middle of the exact sequence
$$1\to \Gamma_{(1,7)}(2,14)\rightarrow \Gamma_2(1,7)^{+}\xrightarrow{r_2}O^{+}_4(\mathbb{Z}/2\mathbb{Z})\to 1.$$
\end{Definition}
The stabilizer subgroup $O^{+}_4(\mathbb{Z}/2\mathbb{Z})\subset Sp_4(\mathbb{Z}/2\mathbb{Z})$ of an even quadratic form is of order $|O^{+}_4(\mathbb{Z}/2\mathbb{Z})| = 72$ and $|\Gamma_{(1,7)}(1,7):\Gamma_2(1,7)^{+}| = 10.$
Using once again the Baily-Borel theorem \cite{BB66}, we have that the quotient 
$$\mathcal{A}_2(1,7)^{+}_{sym} := \mathbb{H}_2/\Gamma_2(1,7)^{+}$$
is a quasi-projective variety. It is the moduli space of $(1,7)$-polarized abelian surfaces $(A,H)$ with $(1,7)$-level structure and an even theta characteristic. Analogously to the odd case, the morphism $f^{+}:\mathcal{A}_2(1,7)^{+}_{sym}\rightarrow\mathcal{A}_{(1,7)}(1,7)$ forgetting the even theta characteristic has degree $|\Gamma_{(1,7)}(1,7):\Gamma_2(1,7)^{+}| = 10$.

These ideas are summarized in the following statement.

\begin{Proposition}
There exist arithmetic subgroups $\Gamma_2(1,7)^+$ and $\Gamma_2(1,7)^-$ such that there are quasi-projective moduli spaces 
\begin{center}
$\mathcal{A}_2(1,7)_{sym}^+:= \mathbb{H}_2/\Gamma_2(1,7)^+$ and $\mathcal{A}_2(1,7)_{sym}^-:= \mathbb{H}_2/\Gamma_2(1,7)^-$
\end{center}
that parametrize abelian surfaces with a $(1,7)$-structure, or equivalently a symmetric theta structure, and respectively an even or an odd theta characteristic.
\end{Proposition}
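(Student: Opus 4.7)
The statement is a summary of the constructions carried out in the preceding pages, so the plan is to assemble the pieces in the right order rather than to prove something new. First I would define the groups explicitly: set
$$\Gamma_2(1,7)^\pm := r_2^{-1}\bigl(O^\pm_4(\mathbb{Z}/2\mathbb{Z})\bigr)\subset \Gamma_{(1,7)}(1,7),$$
using the reduction-mod-two map $r_2\colon\Gamma_{(1,7)}(1,7)\to Sp_4(\mathbb{Z}/2\mathbb{Z})$ from the short exact sequence of \cite[Lemma 4.2]{BM16}. Since the kernel $\Gamma_{(1,7)}(2,14)$ is a congruence subgroup of $Sp_4(\mathbb Z)$ and $\Gamma_2(1,7)^\pm$ sits strictly between it and $\Gamma_{(1,7)}(1,7)$, these are themselves congruence arithmetic subgroups of $Sp_4(\mathbb Z)$. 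Applying the Baily--Borel theorem \cite{BB66} to each of them then gives that the quotients $\mathbb{H}_2/\Gamma_2(1,7)^\pm$ are quasi-projective varieties, which is the first half of the statement.

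For the moduli interpretation I would argue as in the rest of the section. A point of $\mathbb{H}_2/\Gamma_{(1,7)}(1,7)$ is an abelian surface with a polarization of type $(1,7)$ together with a canonical $(1,7)$-level structure $\psi$, which by Lemma \ref{levelsym} is the same datum as a symmetric theta structure $\Psi$. To refine this by a theta characteristic, one uses the action of $Sp_4(\mathbb{Z})$ on half-integer characteristics given by formula (\ref{char-action}), together with the identification of symmetric theta divisors with elements of $\frac12\mathbb{Z}^4/\mathbb{Z}^4$ from \cite[Sections 4.6--4.7]{BL04}. Under the isomorphism $Sp_4(\mathbb{Z}/2\mathbb{Z})\cong S_6$, the two Arf-invariant orbits of characteristics correspond to the two $S_6$-orbits on quadratic forms; the stabilizers are precisely $O^\pm_4(\mathbb{Z}/2\mathbb{Z})$. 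Therefore the preimage $\Gamma_2(1,7)^\pm$ is exactly the subgroup of $\Gamma_{(1,7)}(1,7)$ that fixes a chosen odd (respectively even) theta characteristic, so the orbit space $\mathbb{H}_2/\Gamma_2(1,7)^\pm$ parametrizes quadruples $(A,H,\Psi,q)$ with $q$ of the prescribed parity.

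Finally, the degree computation sanity-checks the construction: the indices $[\Gamma_{(1,7)}(1,7):\Gamma_2(1,7)^\mp]=6,\,10$ computed from $|O^-_4|\cdot 6=|O^+_4|\cdot 10=|Sp_4(\mathbb Z/2\mathbb Z)|=720$ match the numbers of odd and even theta characteristics on a $(1,7)$-polarized abelian surface \cite[Section 4.7]{BL04}, confirming that the forgetful maps $f^\pm\colon \mathcal{A}_2(1,7)^\pm_{sym}\to \mathcal{A}_{(1,7)}(1,7)$ have the right degree.

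I do not expect a genuine obstacle here: the real content lies in the preceding Lemma \ref{levelsym} and in the orbit analysis on half-integer characteristics coming from (\ref{char-action}) and \cite{Igu64}. The main thing to be careful about is the order of quantifiers, namely that \emph{every} $(1,7)$-level structure comes from a unique symmetric theta structure (so that adding a parity datum truly has the moduli interpretation claimed and not just a statement about the quotient variety); this is exactly what Lemma \ref{levelsym} supplies.
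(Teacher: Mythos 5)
Your proposal is correct and follows essentially the same route as the paper, which states this Proposition explicitly as a summary of the preceding constructions: the groups are defined as $r_2^{-1}(O^{\pm}_4(\mathbb{Z}/2\mathbb{Z}))$, quasi-projectivity comes from Baily--Borel applied to these congruence subgroups, and the moduli interpretation rests on Lemma \ref{levelsym} together with the orbit/stabilizer analysis on half-integer characteristics. Your index sanity check ($120\cdot 6=72\cdot 10=720$ matching the $6$ odd and $10$ even theta characteristics) is exactly the computation the paper records just before the statement.
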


\begin{Definition}
We define
$$\Gamma_{(1,7)}(1,7;2,2):= \left\lbrace \left(\begin{matrix}
\mathbf{A} & \mathbf{B} \\ 
\mathbf{C} & \mathbf{D}
\end{matrix}\right)\in \Gamma_{(1,7)}(1,7)\; | \; \mathbf{A}-I \equiv \mathbf{B} \equiv \mathbf{C} \equiv \mathbf{D}-I\equiv 0\mod \left(\begin{matrix}
2 & 0 \\ 
0 & 2
\end{matrix}\right)
\right\rbrace$$
By \cite{BB66} and \cite[Section 8.3]{BL04}, the quasi-projective variety 
$$\mathcal{A}_2(1,7;2,2):= \mathbb{H}_2/ \Gamma_{(1,7)}(1,7;2,2)$$ 
is the moduli space of abelian surfaces with a polarization of type $(1,7)$, a level $(1,7)$-structure and a level $(2,2)$-structure. 
\end{Definition}

In the rest of the paper, while we will not change notation, all the moduli spaces considered will be non-singular models of suitable compactifications of the quasi-projective ones. 

\subsection{Theta-Null maps}
Let us now recall shortly how to construct theta-null maps for moduli spaces of $(1,7)$-polarized abelian surfaces, with a level structure and a theta-characteristic.
 
As we have explained in the preceding section, we identify theta-characteristics with symmetric line bundles, representing the polarization. Hence, we start from the datum $(A, H,L,\psi)$ of an abelian surface with a $(1,7)$-polarization $H$, a level structure $\psi$ and a symmetric line bundle $L\in \Pic(A)$ representing the polarization. As we have seen there exist $16$ symmetric line bundles, $10$ even and $6$ odd, inside the variety $\Pic^H(A)$ parametrizing line bundles that induce the polarization $H$. 

Let us pick the unique normalized linearization \cite[Section 2]{Mum66} on $L$ for the canonical involution $\pm Id$ on $A$, and call $H^0(A,L)^\pm$ the corresponding eigenspaces.
From Lemma \ref{levelsym} we know that the datum of a symmetric theta structure is equivalent to that of a level structure.  The upshot is the following:

\begin{itemize}
\item[-] if $L$ is even, the symmetric theta structure yields an identification $\Psi^+$ between $\P(H^0(A,L)^+)^*$ and an abstract $\P^3$. Similarly, we identify $\P(H^0(A,L)^-)^*$ with an abstract $\P^{2}$;
\item[-] if $L$ is odd, the symmetric theta structure yields an identification $\Psi^-$ between $\P(H^0(A,L)^+)^*$ and an abstract $\P^{2}$. In a similar way, we have an identification of $\P(H^0(A,L)^-)^*$ with an abstract $\P^{3}$.
\end{itemize}

The abstract projective spaces appearing in the preceding lists are all eigenspaces of involutions acting on the Scr\"odinger representation of the appropriate Heisenberg groups (see \cite[Section 5]{BM16} for details). 

Let $(A,L,\psi)$ be a polarized abelian surface $A$, with an even (respectively odd) line bundle $L$ representing the $(1,7)$-polarization, and a symmetric theta structure $\psi$. 

The Theta-Null maps (see \cite[Section 5.1]{BM16} for details and definitions) for abelian surfaces with a theta characteristic are defined as follows.
\begin{eqnarray*}
Th^+_{(1,7)}: \mathcal{A}_2(1,7)_{sym}^+ & \to & \mathbb{P}^3\\
(A,L,\psi) & \mapsto & \Psi^+(\Theta_{1,7}(0))
\end{eqnarray*}
\begin{eqnarray*}
Th^-_{(1,7)}: \mathcal{A}_2(1,7)_{sym}^- & \to & \mathbb{P}^2\\
(A,L,\psi) & \mapsto & \Psi^-(\Theta_{1,7}(0))
\end{eqnarray*}
where $\Theta_{1,7}(0)$ stays for the evaluation of global sections of $L$ at the origin. We remark that global sections that are anti-invariant with respect to the canonical involution on $A$ vanish at the origin. This is basically why we want to consider only invariant spaces of sections via the symmetric theta structure.

\section{Birational geometry of moduli spaces of (1,7)-polarized abelian surfaces}\label{birsection}
Recall that a proper variety $X$ over an algebraically closed field is rationally connected if two general points $x_1,x_2\in X$ can be joined by an irreducible rational curve. 

In \cite[Theorem 2]{BM16} we proved that the moduli space $\mathcal{A}_2(1,7)^{-}_{sym}$ of abelian surfaces with a symmetric theta structure and an odd theta characteristic is rationally connected. 

Clearly, rationality implies unirationality, which in turns implies rational connectedness. If $X$ is a smooth algebraic variety over an algebraically closed field of characteristic zero and $\dim(X)\leq 2$ these three notions are indeed equivalent \cite[Remark 1.3]{Ha01}. 

On the other hand, it is well known that a smooth cubic $3$-fold $X\subset\mathbb{P}^4$ is unirational but not rational \cite{CG72}. It is a long-standing open problem whether there exist varieties which are rationally connected but not unirational \cite[Section 1.24]{Ha01}. 

In this section, by using the techniques developed in Section \ref{VSPsection} we will prove that $\mathcal{A}_2(1,7)^{-}_{sym}$ is unirational.

\begin{thm}\label{Prop17}
Let $\mathcal{A}_2(1,7)^{-}_{sym}$ and $\mathcal{A}_2(1,7)^{+}_{sym}$ be the moduli spaces of abelian surfaces with level $(1,7)$-structure, a symmetric theta structure and an odd, respectively even theta characteristic. 

The moduli spaces $\mathcal{A}_2(1,7)^{-}_{sym}$ and $\mathcal{A}_2(1,7)^{+}_{sym}$ are birational to the varieties of sums of powers $\VSP_6(F_4,6)$ and $\VSP^6(F_4,6)$ respectively, where $F_4\in k[x_0,x_1,x_2]_4$ is the Klein quartic. Furthermore, the moduli space $\mathcal{A}_2(1,7;2,2)$ is birational to $\VSP_{ord}(F_4,6)$.
\end{thm}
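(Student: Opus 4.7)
The starting point for the plan is the birational isomorphism $\mathcal{A}_2(1,7)\dashrightarrow \VSP(F_4,6)$ of Gross--Popescu recalled in Remark \ref{maps}. The degrees of the three covers to be identified are all compatible: on the one hand, $\mathcal{A}_2(1,7;2,2)\to\mathcal{A}_2(1,7)$ has Galois group $Sp_4(\mathbb{Z}/2\mathbb{Z})\cong S_6$ of order $720$, while $f^{-}$ and $f^{+}$ have degrees $6$ and $10$; on the other hand, $\phi_6:\VSP_{ord}(F_4,6)\dashrightarrow \VSP(F_4,6)$ has degree $720$, while $\chi_6$ and $\chi^6$ have degrees $6$ and $10$. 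The strategy is therefore to promote the Gross--Popescu identification to an $S_6$-equivariant birational map $\mathcal{A}_2(1,7;2,2)\dashrightarrow \VSP_{ord}(F_4,6)$, and then to deduce the remaining two statements by taking quotients under the appropriate subgroups.

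For the promotion, I would match the two $S_6$-sets of fibers over a general point. On the abelian side the fiber is a torsor for $Sp_4(\mathbb{Z}/2\mathbb{Z})$ which, under the exceptional isomorphism $Sp_4(\mathbb{Z}/2\mathbb{Z})\cong S_6$, acts on the six odd theta characteristics of $A$ via the standard permutation representation. On the VSP side the fiber of $\phi_6$ is the set of orderings of the six linear forms appearing in a Mukai decomposition of $F_4$. The desired identification is natural: for a general Klein-type quartic, Mukai's construction already pairs the six linear forms with the six odd theta-null points of the corresponding abelian surface. Once this matching is checked to intertwine the two $S_6$-actions, one obtains the top-level birational isomorphism, and the intermediate equivalences follow by quotienting. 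Under $Sp_4(\mathbb{Z}/2\mathbb{Z})\cong S_6$, the subgroup $O^{-}_4(\mathbb{Z}/2\mathbb{Z})$ stabilizing an odd form corresponds to the stabilizer $S_5$ of a single letter, yielding $\mathcal{A}_2(1,7)^{-}_{sym}\dashrightarrow \VSP_6(F_4,6)$ as in Definition \ref{vsph}; the subgroup $O^{+}_4(\mathbb{Z}/2\mathbb{Z})$ of order $72$ stabilizing an even form corresponds to the stabilizer $(S_3\times S_3)\rtimes S_2$ of an unordered partition of $\{1,\dots,6\}$ into two triples, yielding $\mathcal{A}_2(1,7)^{+}_{sym}\dashrightarrow \VSP^6(F_4,6)$.

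The delicate step is the Galois-equivariance of the matching in the second paragraph: since $S_6$ has a non-trivial outer automorphism that swaps the two conjugacy classes of $S_5$-subgroups, one must ensure that the identification sends \emph{points} to \emph{points} (six odd theta characteristics to six linear forms) rather than to \emph{synthemes} (ten triple-partitions to ten even theta characteristics). A convenient way to rule out the wrong identification is to exploit the Heisenberg equivariance of the Gross--Popescu construction: it is compatible with the Heisenberg action on both sides, so after passing to the invariant level the residual $Sp_4(\mathbb{Z}/2\mathbb{Z})$-action on the theta characteristics is forced to match the Heisenberg-normalizing action on Mukai's six forms, and the two orbit structures of sizes $6$ and $10$ pin down the assignment to the standard one. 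With this equivariance in hand all three birational identities in the statement follow in a single stroke.
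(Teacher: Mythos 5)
Your proposal is correct and rests on the same ingredients as the paper's own argument: the Gross--Popescu birational map $\mathcal{A}_2(1,7)\dasharrow\VSP(F_4,6)$ of Remark \ref{maps}, the identification of the six linear forms of a Mukai decomposition with the six odd theta characteristics (equivalently, the six odd $2$-torsion points) via the theta-null map, and the standard dictionary under $Sp_4(\mathbb{Z}/2\mathbb{Z})\cong S_6$ matching odd characteristics with letters and even characteristics with unordered partitions of the six letters into two triples. The only substantive difference is organizational: the paper first constructs the odd-case map $\beta:\mathcal{A}_2(1,7)^{-}_{sym}\dasharrow\VSP_6(F_4,6)$ pointwise, sending $(A,\psi,L)$ to the linear form in $\chi_6^{-1}(\{L_{1,A},\dots,L_{6,A}\})$ singled out by $Th^-_{(1,7)}(A,\psi,L)$ --- a concrete assignment of a point to a point that automatically sidesteps the outer-automorphism ambiguity you rightly flag --- and then deduces the even and ordered cases from the classical correspondence between even theta characteristics, respectively $(2,2)$-level structures, and partitions, respectively complete orderings, of the Weierstrass points; you instead establish the full $S_6$-equivariant identification $\mathcal{A}_2(1,7;2,2)\dasharrow\VSP_{ord}(F_4,6)$ at the top and descend by quotients, which puts the burden on the equivariance check you sketch in your last paragraph and which the paper's pointwise construction renders unnecessary.
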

\begin{proof}
Let $\mathcal{A}_2(1,7)$ be the moduli space of abelian surfaces with a $(1,7)$-level structure. Recall from \cite[Section 6.1.1]{BM16} that there exists a Theta-Null map $Th^-_{(1,7)}:\mathcal{A}_2(1,7)^{-}_{sym} \to \P^2$. 

By \cite{MS01} and \cite[Proposition 5.4 and Corollary 5.6]{GP01} there exists a birational map $\alpha: \mathcal{A}_2(1,7)\dasharrow \VSP(F_4,6)$ mapping a general $(A,\psi)\in\mathcal{A}_2(1,7)$ to the set $\{L_{1,A},\dots,L_{6,A}\}\in A$ of the odd $2$-torsion points of $A$, that are naturally mapped to $\P^2$ by the Theta-Null map. The six points $\{L_{1,A},\dots,L_{6,A}\}$ of $\P^2$ give a decomposition in $\VSP(F_4,6)$.

Each of the $6$ odd $2$-torsion points correspond to a choice of an odd theta characteristic via $Th^-_{(1,7)}$. Now, consider a general point $(A,\psi,L)$ of $\mathcal{A}_2(1,7)^{-}_{sym}$ over $(A,\psi)\in\mathcal{A}_2(1,7)$. We may define a rational map $\beta:\mathcal{A}_2(1,7)^{-}_{sym}\dasharrow\VSP_6(F_4,6)$ sending $(A,\psi,L)$ to the linear form in $\chi_6^{-1}(\{L_{1,A},...,L_{6,A}\})$ that corresponds to $Th^-_{(1,7)}(A,\psi,L)\in \P^2$, where $\chi_6$ is the map in Remark \ref{maps}. To conclude it is enough to observe that since $\alpha$ is birational the map $\beta:\mathcal{A}_2(1,7)^{-}_{sym}\dasharrow \VSP_6(F_4,6)$ is birational as well.

In particular, since the generic abelian surface is Jacobian, odd theta characteristics correspond to the Weierstrass points of the corresponding curve. It is a classical fact that even theta characteristics correspond to partitions of the Weierstrass points into two $3$-elements sets, see for example \cite[Chapter 8]{DO88}. This directly implies that
$\mathcal{A}_2(1,7)^{+}_{sym}$ is birational to $\VSP^6(F_4,6)$. 

Finally, recall that a $(2,2)$-level structure for a Jacobian abelian surface corresponds to a complete ordering of the Weierstrass points \cite[Chapter 8]{DO88}. This in turn implies that the moduli space $\mathcal{A}_2(1,7;2,2)$ is birational to $\VSP_{ord}(F_4,6)$.
\end{proof}
We summarize the situation in the following diagram, where the superscripts on the arrows indicate the degrees of the respective maps.
\[
  \begin{tikzpicture}[xscale=3.5,yscale=-1.5]
    \node (A0_0) at (0, 0) {};
    \node (A0_1) at (1, 0) {$\VSP_{ord}(F_4,6)\overset{\rm{bir}}{\cong}\mathcal{A}_2(1,7;2,2)$};
    \node (A1_0) at (0, 1) {$\mathcal{P}_3$};
    \node (A1_1) at (1, 1) {$\mathcal{P}_2$};
    \node (A1_2) at (2, 1) {$\VSP_{6}(F_4,6)\overset{\rm{bir}}{\cong}\mathcal{A}_2(1,7)_{sym}^{-}$};
    \node (A2_0) at (0, 2) {$\VSP^{6}(F_4,6)\overset{\rm{bir}}{\cong}\mathcal{A}_2(1,7)_{sym}^{+}$};
    \node (A2_2) at (2, 2) {$\VSP(F_4,6)\overset{\rm{bir}}{\cong}\mathcal{A}_2(1,7)$};
    \path (A0_1) edge [->,dashed]node [auto] {$\scriptstyle{5!}$} (A1_2);
    \path (A0_1) edge [->]node [auto,swap] {$\scriptstyle{3!}$} (A1_0);
    \path (A1_0) edge [->,dashed]node [auto] {$\scriptstyle{120}$} (A2_2);
    \path (A1_0) edge [->]node [auto] {$\scriptstyle{4}$} (A1_1);
    \path (A1_1) edge [->,dashed]node [auto] {$\scriptstyle{5}$} (A1_2);
    \path (A1_1) edge [->,dashed]node [auto] {$\scriptstyle{30}$} (A2_2);
    \path (A1_0) edge [->,dashed]node [auto,swap] {$\scriptstyle{12}$} (A2_0);
    \path (A2_0) edge [->,dashed]node [auto] {$\scriptstyle{10}$} (A2_2);
    \path (A0_1) edge [->,swap]node [auto] {$\scriptstyle{4!}$} (A1_1);
    \path (A1_2) edge [->,dashed]node [auto] {$\scriptstyle{6}$} (A2_2);
  \end{tikzpicture}
  \]
\begin{thm}\label{A17}
The moduli space $\mathcal{A}_2(1,7)^{-}_{sym}$ is unirational, and hence its Kodaira dimension is $-\infty$. 
\end{thm}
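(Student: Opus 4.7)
The plan is to chain together the two main results already proved in the paper. First, by Theorem \ref{Prop17}, the moduli space $\mathcal{A}_2(1,7)^{-}_{sym}$ is birational to the variety of sums of powers $\VSP_6(F_4,6)$ associated to the Klein quartic. Since unirationality is a birational invariant, it suffices to show that $\VSP_6(F_4,6)$ is unirational.

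For the latter, I would invoke Proposition \ref{p2}, which constructs a generically finite dominant rational map $\alpha_{F_4}:\mathcal{P}_2\dasharrow\VSP_{6}(F_4,6)$ of degree $5$. Combined with Proposition \ref{p1}, which exhibits $\mathcal{P}_2$ as a smooth conic bundle over $\mathbb{P}^2$ whose discriminant is the smooth plane sextic $\{a_1b_1^5+a_1^5c_1-5a_1^2b_1^2c_1^2+b_1c_1^5=0\}$ and therefore unirational by \cite[Corollary 1.2]{Me14}, we get a dominant rational map from a unirational variety (indeed, from a rational variety via $\mathcal{P}_2$) onto $\VSP_6(F_4,6)$. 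Composing the unirational parametrization of $\mathcal{P}_2$ with $\alpha_{F_4}$ yields a dominant rational map from a projective space to $\VSP_6(F_4,6)$, which is the definition of unirationality.

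Putting these pieces together: the birational equivalence $\mathcal{A}_2(1,7)^{-}_{sym}\overset{\mathrm{bir}}{\cong}\VSP_6(F_4,6)$ of Theorem \ref{Prop17}, the dominant map $\mathcal{P}_2\dasharrow\VSP_6(F_4,6)$ of Proposition \ref{p2}, and the unirationality of $\mathcal{P}_2$ from Proposition \ref{p1} combine to give a dominant rational map from a projective space onto $\mathcal{A}_2(1,7)^{-}_{sym}$. The Kodaira dimension statement then follows immediately, since any unirational variety $X$ admits a dominant rational map from $\mathbb{P}^N$ and therefore has $H^0(X,mK_X)=0$ for every $m>0$, forcing $\kappa(\mathcal{A}_2(1,7)^{-}_{sym})=-\infty$.

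In this approach no step is genuinely difficult at this stage: the real work has already been carried out in Section \ref{VSPsection} (producing the conic bundle $\mathcal{P}_2$ and the degree $5$ map $\alpha_{F_4}$) and in the proof of Theorem \ref{Prop17} (identifying $\mathcal{A}_2(1,7)^{-}_{sym}$ with $\VSP_6(F_4,6)$ via the Theta-Null map and the Mukai--Gross--Popescu picture). The only care required is to verify that the composition of rational maps remains dominant, which follows because both $\alpha_{F_4}$ and the unirational parametrization of $\mathcal{P}_2$ are dominant and defined on open dense subsets.
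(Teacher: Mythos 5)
Your proposal is correct and follows exactly the paper's argument: the paper deduces Theorem \ref{A17} by combining the birational equivalence of Theorem \ref{Prop17} with the unirationality of $\VSP_6(F_4,6)$ established in Proposition \ref{p2} via the conic bundle $\mathcal{P}_2$. The extra remarks on dominance of the composition and the standard vanishing of plurigenera for unirational varieties are accurate but not needed beyond what the paper already records.
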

\begin{proof}
Since by Proposition \ref{p2} $\VSP_6(F_4,6)$ is unirational the claim follows from Proposition \ref{Prop17}.
\end{proof}

\begin{Remark}
The moduli space $\mathcal{A}_2(1,7;2,2)$ admits a rational fibration over $\mathbb{P}^2$ whose general fiber is a curve of general type. 

Indeed, the fibers of the restriction to $\VSP_{ord}(F_4,6)$ of the first projection $\pi_{|\VSP_{ord}(F_4,6)}:\VSP_{ord}(F_4,6)\rightarrow\mathbb{P}^2$ are mapped by $f_3:\VSP_{ord}(F_4,6)\rightarrow \mathcal{P}_3$ onto the fibers of the morphism $g_3:\mathcal{P}_3\rightarrow\mathbb{P}^2$. 

By Proposition \ref{p2} the general fiber of $g_3$ is a curve of general type. Hence the general fiber of $\pi_{|\VSP_{ord}(F_4,6)}$ is of general type as well, and by Proposition \ref{Prop17} $\pi_{|\VSP_{ord}(F_4,6)}$ induces a rational fibration of $\mathcal{A}_2(1,7;2,2)$ over $\mathbb{P}^2$ whose general fiber is a curve of general type.
\end{Remark}

\subsection{Questions}
We close the paper with some open questions on the birational type of the moduli spaces $\mathcal{A}_2(1,7;2,2)$ and $\mathcal{A}_2(1,7)_{sym}^{+}$ or equivalently of the varieties of sums of powers $\VSP_{ord}(F_4,6)$ and $\VSP^{6}(F_4,6)$.
\begin{Question}
\textit{Are $\mathcal{A}_2(1,7;2,2)$ and $\mathcal{A}_2(1,7)_{sym}^{+}$ varieties of general type?}
\end{Question}
We would like to mention that the analogous problem for the variety of sums of powers $\VSP_{ord}(F_3,4)$ where $F_3\in k[x_0,x_1,x_2]_3$ is a general cubic polynomial, has been studied in \cite{RR93}. Indeed, by \cite[Theorem 8.3]{RR93} we have that $\VSP_{ord}(F_3,4)$ is of general type.

Note that by Proposition \ref{p2} we have a finite morphism 
$$f_3:\VSP_{ord}(F_4,6)\rightarrow\mathcal{P}_3\subseteq (\mathbb{P}^{2*})^3$$
where $\mathcal{P}_3\subseteq (\mathbb{P}^{2*})^3$ is a complete intersection irreducible $3$-fold cut out by equations of multi-degree $(2,2,0)$, $(2,0,2)$ and $(0,2,2)$. Therefore, by adjunction the dualizing sheaf of $\mathcal{P}_3$ is
$$\omega_{\mathcal{P}_3}\cong\mathcal{O}_{\mathcal{P}_3}(1,1,1).$$
We checked using Macaulay2 \cite{Mc2} that $\mathcal{P}_3$ is singular along a curve.
\begin{Question}
\textit{Are the singularities of the $3$-fold $\mathcal{P}_3$ at worst canonical?} 
\end{Question}
Note that a positive answer to this last question would imply that $\mathcal{P}_3$, and hence $\VSP_{ord}(F_4,6)$, are of general type.  

\bibliographystyle{amsalpha}
\bibliography{Biblio}

\end{document}